\newcommand{\bA}{{\mathbb A}}
\newcommand{\bC}{{\mathbb C}}
\newcommand{\bF}{{\mathbb F}}
\newcommand{\bK}{{\mathbb K}}
\newcommand{\bN}{{\mathbb N}}
\newcommand{\bZ}{{\mathbb Z}}
\newcommand{\mA}{{\mathcal A}}
\newcommand{\mD}{{\mathcal D}}
\newcommand{\cE}{{\mathcal E}}
\newcommand{\cO}{{\mathcal O}}
\newcommand{\mU}{{\mathcal U}}
\newcommand{\fg}{{\mathfrak g}}
\newcommand{\fh}{{\mathfrak h}}
\newcommand{\fn}{{\mathfrak n}}
\newcommand{\gl}{{\mathfrak g}{\mathfrak l}}
\newcommand{\GL}{{{\mbox{\rm GL}}}}
\newcommand{\Ad}{{{\mbox{\rm Ad}}}}
\newcommand{\Aut}{{{\mbox{\rm Aut}}}}
\newcommand{\Lie}{{{\mbox{\rm Lie}}}}
\newcommand{\PGL}{{{\mbox{\rm PGL}}}} 
\renewcommand{\Pr}{{{\mbox{\rm Prim}}}}
\newcommand{\Exp}{{{\mbox{\rm Exp}}}_{G}}
\newcommand{\Exx}{{{\mbox{\rm Exp}}}}
\newcommand{\Exv}{{{\mbox{\rm Exp}}}_{{\mbox{\tiny GL}}(V)}}
\newcommand{\End}{{{\mbox{\rm End}}}}
\newcommand{\Di}{{{\mbox{\rm Dist}}}}
\newcommand{\Mod}{{-{\mbox{\rm Mod}}}}
\renewcommand{\1}{_{(1)}}
\renewcommand{\2}{_{(2)}}
\newcommand{\0}{_{(n)}}
\newcommand{\8}{^{[p]}}
\newcommand{\va}{\ensuremath{\mathbf{a}}\xspace}
\newcommand{\vd}{\ensuremath{\mathbf{d}}\xspace}
\newcommand{\ve}{\ensuremath{\mathbf{e}}\xspace}
\newcommand{\vh}{\ensuremath{\mathbf{h}}\xspace}
\newcommand{\vx}{\ensuremath{\mathbf{x}}\xspace}
\newcommand{\vy}{\ensuremath{\mathbf{y}}\xspace}
\newtheorem{theorem}{Theorem}
\newtheorem{prop}[theorem]{Proposition}
\newtheorem{lemma}[theorem]{Lemma}
\newtheorem{cor}[theorem]{Corollary}
\newtheorem*{HKC}{Higher Kunz' Conjecture}
\newtheorem*{HFC}{Higher Frobenius Conjecture}
\begin{document}
\title[Integration of Modules]{Integration of Modules -- II: Exponentials}
\author{Dmitriy Rumynin}
\email{D.Rumynin@warwick.ac.uk}
\address{Department of Mathematics, University of Warwick, Coventry, CV4 7AL, UK
  \newline
\hspace*{0.31cm}  Associated member of Laboratory of Algebraic Geometry, National
Research University Higher School of Economics, Russia}
\thanks{The first author was partially supported within the framework of the HSE University Basic Research Program and the Russian Academic Excellence Project `5--100',  as well as by the Max Planck Institute for Mathematics, Bonn.}
\author{Matthew Westaway}
\email{M.P.Westaway@bham.ac.uk}
\address{School of Mathematics, University of Birmingham, Birmingham, B15 2TT, UK
}
\date{July 23, 2018}
\subjclass[2010]{Primary  20G05; Secondary 17B45}
\keywords{restricted representation, Frobenius kernel, exponential, representation, Kac-Moody group}
  
\begin{abstract}
We continue our exploration of
various approaches to integration of
representations from a Lie algebra $\Lie (G)$
to an algebraic group $G$ 
in positive characteristic.
In the present paper we concentrate on
an approach exploiting exponentials.
This approach works well for over-restricted representations,
introduced in this paper, and takes no note of
  $G$-stability.
\end{abstract}

\maketitle

If $G$ is a connected simply-connected Lie group
with Lie 
algebra $\fg$,
the categories of finite-dimensional $G$-modules and
$\fg$-modules are equivalent.
In one direction the equivalence
is a differentiation functor 
$\mD : G\Mod \rightarrow \fg\Mod$.
Its quasi-inverse is an integration (or exponentiation) functor
$\cE : \fg\Mod \rightarrow G\Mod$.
Since every $x\in G$ can be written as a product of exponentials
$x= \Exp(\va_1)\Exp(\va_2) \ldots \Exp (\va_n)$, $\va_i\in \fg$,
we can 
exponentiate a representation
$\cE (V,\theta) = (V,\Theta)$
by an explicit formula
$$
\Theta \Big( \Exp(\va_1)\Exp(\va_2) \cdots \Exp (\va_n)\Big) =
\Exv(\theta(\va_1)) \cdots \Exv (\theta(\va_n)).
$$

This method works for a semisimple simply-connected algebraic group $G$
over $\bC$ and its category
$G\Mod$ of rational representations.
The key observation is that $\cE (V,\theta)$ is rational.
In this case $G$ is generated by unipotent root subgroups $U_\alpha$,
thus, we can choose $\va_i\in\fg_{\alpha_i}$ in the exponentiation formula.
Then $\theta (\va_i)$ is nilpotent, so $\Exv (\theta (\va_i))$ is polynomial.

Curiously, we can use the same formula for exponentiation of representations
for more general algebraic groups. However, we
can no longer rely on the Lie group structure on $G$ for proving
that the exponentiation formula produces a well-defined group
homomorphism $\Theta : G \rightarrow \GL (V)$. It is a minor inconvenience
in zero characteristic that turns into a major technical issue in positive
characteristic.

The idea of using exponentials
in positive characteristic goes back to Chevalley and his construction
of finite groups of Lie type.
Kac and Weisfeiler use exponentials in positive characteristic
to study contragradient Lie algebras \cite{VK}. 
If $G$ is an algebraic group over a field of positive characteristic,
its Lie algebra $\fg$ is a restricted subalgebra of the commutator
Lie algebra $U_0(\fg)^{(-)}$ of the restricted enveloping algebra
$U_0(\fg)$.
N.B., $U_0(\fg)^{(-)}$ is the Lie algebra
of the algebraic group $\GL_1 (U_0(\fg))$
but $\fg$ is not an algebraic subalgebra, i.e.,
not the Lie algebra of an algebraic subgroup. 
Let $\widehat{G} \leq \GL_1 (U_0(\fg))$
be 
a minimal (possibly non-unique) algebraic subgroup\footnote{We have been informed that there exists an unpublished preprint by Haboush where a group similar to $\widehat{G}$ has been studied but we could not find it.}
  of $\GL_1 (U_0(\fg))$
whose Lie algebra contains $\fg$. 

An interesting fact about the group $\widehat{G}$ is that it acts
on restricted $\fg$-modules. If we can relate the groups $\widehat{G}$
and $G$, we may  be able to integrate representations of $\fg$.
Notice that not all $\fg$-modules are integrable: the baby Verma modules
(except Steinberg modules) have
non-$G$-stable support varieties, hence, cannot be integrated.
Yet $\widehat{G}$ acts on the baby Verma modules. This suggests that the
relation between $G$ and $\widehat{G}$ is delicate.  
We uncover this relation for some class of modules which we call {\em over-restricted}.

Now we reveal the detailed content of the present paper,
emphasising the main results. 
We study exponentials on a restricted representation
of a restricted Lie algebra in Section~\ref{s1.1}.
These are particularly well-behaved
when the representation is not only restricted
but also {\em over-restricted}, a concept
introduced in this section.
Our first major result of the paper is Theorem~\ref{extension_g} 
in this section. This yields
Corollary~\ref{extension_g1}, 
a notable general result
which says that for an algebraic group $G$,
with some mild restrictions, an over-restricted representation
of its Lie algebra can be integrated to the group $G$.

In Section~\ref{s1.N} we extend the concept
of an over-restricted representation to
Kac-Moody Lie algebras.
The main result of this section is Theorem~\ref{extension_gKM}:
an over-restricted representation of a Kac-Moody 
Lie algebra can be integrated to the Kac-Moody group.
The set-up of this section is similar, yet slightly different
from the over-restricted representations of Kac-Moody
algebras discussed by the first author
in another paper \cite{Ru}. 

In Section~\ref{s1.2} we study over-restricted representations
of the higher Frobenius kernels of a semisimple algebraic group $G$.
We switch to semisimple groups as there are some subtleties to overcome
compared to the first Frobenius kernels.
We stop short of proving 
an analogue of Theorem~\ref{extension_g}
for the higher Frobenius kernels.
We formulate it as a conjecture instead.


In Section~\ref{s1.3} we elaborate how our Higher Frobenius Conjecture
applies to the  
Humphreys-Verma Conjecture, a well-known hypothesis that
projective $U_0(\fg)$-modules are $G$-modules.

We discuss examples of over-restricted representations in Section~\ref{s1.4}.
We give several non-trivial examples of over-restricted representations
of classical simple Lie algebras and propose the notion of an over-restricted
enveloping algebra.

We draw conclusions for this and the first paper in the series \cite{RuW}
in Section~\ref{s3.5}. The final section~\ref{Appendix} is a technical appendix
with essential results on generic smoothness of morphisms of algebraic varieties.

We would like to thank Ami Braun for valuable discussions
and pointing our attention to Kunz' Conjecture.
We are grateful to Jim Humphreys for helpful comments
and detection of some  mistakes in early versions.
We are indebted to Stephen Donkin
for encouragement, interest in our work and sharing 
Xanthopoulos' thesis.

\section{Over-restricted Representations}
\label{s1.1}
Let $(\fg, \,\8)$ 
be a restricted Lie algebra over a field $\bK$ of characteristic $p$,
$U_0 (\fg)$ its restricted enveloping algebra, 
$(V,\theta)$ a restricted representation.
Let $N_p(\fg)$ be the $p$-nilpotent cone of $\fg$, i.e,
the set of all $\vx\in\fg$ such that $\vx\8 =0$.
Notice that for $\vx\in N_p(\fg)$
we have $\theta (\vx)^p = \theta (\vx\8) = 0$. 
This allows us to define
exponentials for each $\vx\in N_p(\fg)$:
$$
e^{\theta(\vx)} = \sum_{k=0}^{p-1}
\frac{1}{k!} \theta(\vx)^k \in {\mathfrak g}{\mathfrak l} (V)
\; .
$$
The element $e^{\theta(\vx)}$ is 
invertible
because
$(e^{\theta(\vx)})^{-1}= e^{\theta(-\vx)}$.
We define
{\em the pseudo-Chevalley group} $G_V$
as the subgroup of $\GL (V)$
generated by all exponentials $e^{\theta(\vx)}$ for all $\vx\in N_p(\fg)$.
\begin{prop} \label{prop_GV}
  The following statements hold for any restricted
  finite-dimensional representation $(V,\theta)$ of $\fg$:
  \begin{enumerate}
  \item $G_V$ is a (Zariski) closed subgroup of $\GL (V)$.
  \item One can choose finitely many
    $\vx_1,\vx_2, \ldots, \vx_n \in N_p (\fg)$
    such that the following map $f$ is surjective: 
    $$
    f: \bK^n \rightarrow G_V, \ \
    f(a_1,a_2, \ldots , a_n) =
  e^{\theta (a_1 \vx_1)}\cdots e^{\theta (a_n \vx_n)} \; .
  $$
  \end{enumerate}
\end{prop}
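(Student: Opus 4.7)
The plan is to deduce both parts of the proposition from the classical theorem of Chevalley on generation of algebraic groups by images of irreducible varieties (see Humphreys, \emph{Linear Algebraic Groups}, Proposition 7.5, or Borel, \emph{Linear Algebraic Groups}, I.2.2).

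For each $\vx \in N_p(\fg)$, the operator $\theta(\vx)$ is nilpotent with $\theta(\vx)^p = 0$, so the map $\varphi_\vx : \bK \to \GL(V)$, $a \mapsto e^{\theta(a\vx)}$, is a polynomial morphism of algebraic varieties. Its image $U_\vx$ is an irreducible subset of $\GL(V)$ containing the identity. Since $a\vx$ and $b\vx$ lie on the same line in $\fg$, the operators $\theta(a\vx)$ and $\theta(b\vx)$ commute and their $p$-th powers vanish, which forces the truncated exponentials to satisfy $e^{\theta(a\vx)} e^{\theta(b\vx)} = e^{\theta((a+b)\vx)}$, so $U_\vx$ is in fact a closed one-parameter subgroup, isomorphic to $\bG_a$ (or trivial if $\theta(\vx)=0$). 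By construction, $G_V$ is the abstract subgroup of $\GL(V)$ generated by $\bigcup_{\vx \in N_p(\fg)} U_\vx$.

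Applying the generation theorem to the family $\{\varphi_\vx\}_{\vx \in N_p(\fg)}$ yields a closed connected subgroup $H \le \GL(V)$ containing every $U_\vx$, together with a finite sequence $\vx_1, \vx_2, \ldots, \vx_n \in N_p(\fg)$ such that $H = U_{\vx_1} U_{\vx_2} \cdots U_{\vx_n}$ (the $\pm 1$ exponents delivered by the theorem are redundant since each $U_{\vx_i}$ is itself a group). Since $H$ is a subgroup containing all the generators of $G_V$, we have $G_V \subseteq H$. Conversely, each $U_{\vx_i}$ lies in the abstract group $G_V$, and hence so does the finite product $H$. Thus $G_V = H$ is closed, proving (1). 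The identity $G_V = U_{\vx_1} U_{\vx_2} \cdots U_{\vx_n}$ says precisely that the map $f : \bK^n \to G_V$ of (2) is surjective, since the $i$-th factor $a_i \mapsto e^{\theta(a_i\vx_i)}$ parametrises $U_{\vx_i}$.

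There is no substantial obstacle once the framework of the generation theorem is invoked; the only verifications needed are the elementary facts that $\varphi_\vx$ is a morphism from an irreducible variety through the identity, and that $U_\vx$ is closed under products and inverses — both of which follow immediately from standard properties of truncated exponentials of nilpotent operators lying on a single line of $\fg$.
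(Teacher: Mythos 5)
Your proof is correct and follows essentially the same route as the paper: both invoke the Chevalley generation theorem (Borel, Prop.\ I.2.2) applied to the family of morphisms $a \mapsto e^{\theta(a\vx)}$ indexed by $\vx \in N_p(\fg)$. The paper simply cites the result with the choice of data, while you spell out the verification of the hypotheses and the identification $G_V = H$.
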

\begin{proof}
It follows from the standard fact \cite[Prop I.2.2]{Bor}
  by choosing $I=N_p(\fg)$, $V_\vx = \bK$, $f_\vx (a) = e^{\theta (a \vx)}$
  in Borel's notation.
\end{proof}

Two particular pseudo-Chevalley groups are worth separate discussion.
Let $(U_0(\fg),\theta)$ be the left regular representation of $\fg$
on its restricted enveloping algebra. The exponential
$e^{\theta (\vx)}$
is uniquely determined by its application to the identity
$$
e^{\theta (\vx)} (1) =
\sum_{k=0}^{p-1} \frac{1}{k!} \vx^k \in U_0(\fg)\; .
$$
This element should be called 
$e^{\vx} \in U_0(\fg)$.
We can identify
$e^{\theta (\vx)}$
with $e^{\vx}$
because 
$G_{U_0(\fg)}$
is a subgroup of $\GL_1 (U_0(\fg))$
that, in turn,
acts on $U_0(\fg)$ by left multiplication:
$$
G_{U_0(\fg)} \leq \GL_1 (U_0(\fg)) \leq \GL (U_0(\fg)_{\bK}).
$$
We define the group $\widehat{G}$
discussed in the introduction
as $\widehat{G}\coloneqq G_{U_0(\fg)}$.
It acts on restricted $\fg$-modules,
hence, its structure is worth further investigation.

The element $e^\vx$ is not group-like in $U_0(\fg)$, yet it is close to it in
the sense that
$$
\Delta (e^\vx) =
e^\vx \otimes e^\vx + \cO (\vx^{\lfloor(p+1)/2\rfloor})
$$
where $\cO (\vx^{m})$ denotes a sum of terms containing $\vx^k$ with $k\geq m$. 
To make this precise, we 
say that a $U_0 (\fg)$-module $V$ is {\em over-restricted} if
$\theta (\vx)^{\lfloor(p+1)/2\rfloor} = 0$ for all $\vx\in N_p(\fg)$.
See Section~\ref{s1.4} for some examples. 
Notice that if $p=2$, then $\lfloor(p+1)/2\rfloor = 1$
and this requirement is severe: $\theta (\vx)=0$.
\begin{prop}
\label{abs_chev}
Let 
$(\fg,ad)$ be the adjoint representation.
If 
$(V,\theta)$ is an
over-restricted representation,
then
$$
\theta(e^{ad(\vx)}(\vy))= e^{\theta(\vx)}\theta (\vy) e^{-\theta(\vx)}
$$ 
for all $\vx\in N_p(\fg)$, $\vy\in\fg$.
\end{prop}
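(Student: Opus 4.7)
The plan is to show the identity term-by-term, using that $\theta$ is a Lie algebra homomorphism (in particular to the commutator algebra $\gl(V)^{(-)}$), the nilpotency identity $\ad(\vx)^p(\vy)=\ad(\vx\8)(\vy)=0$, and the binomial expansion
$$
\ad(A)^k(B)=\sum_{i+j=k}\binom{k}{i}(-1)^j A^i B A^j
$$
valid in any associative algebra. Setting $A=\theta(\vx)$ and $B=\theta(\vy)$, the truncated exponential $e^{\ad(\vx)}(\vy)=\sum_{k=0}^{p-1}\tfrac{1}{k!}\ad(\vx)^k(\vy)$ maps under $\theta$ to
$$
\theta\bigl(e^{\ad(\vx)}(\vy)\bigr)=\sum_{i+j\le p-1}\frac{(-1)^j}{i!\,j!}\,\theta(\vx)^i\,\theta(\vy)\,\theta(\vx)^j,
$$
while a direct multiplication gives
$$
e^{\theta(\vx)}\theta(\vy)e^{-\theta(\vx)}=\sum_{i,\,j=0}^{p-1}\frac{(-1)^j}{i!\,j!}\,\theta(\vx)^i\,\theta(\vy)\,\theta(\vx)^j.
$$

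So the two expressions differ only by the sum of those terms with $i+j\ge p$ and $0\le i,j\le p-1$. The heart of the argument is the elementary observation that in any such pair at least one of $i,j$ is $\ge \lfloor(p+1)/2\rfloor$: indeed if both were strictly smaller then $i+j\le 2(\lfloor(p+1)/2\rfloor-1)<p$, contradicting $i+j\ge p$ (the cases $p=2$ and $p$ odd are both handled by this estimate). By the over-restricted hypothesis $\theta(\vx)^{\lfloor(p+1)/2\rfloor}=0$, every such term vanishes, and the identity follows.

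The only step with any subtlety is the combinatorial bookkeeping just described; everything else (truncation at $p-1$, the two binomial rewrites, the fact that $\theta$ preserves brackets) is routine. A minor sanity check is the degenerate case $p=2$, where over-restrictedness forces $\theta(\vx)=0$ on all of $N_p(\fg)$, making both sides equal to $\theta(\vy)$ after noting that $\theta([\vx,\vy])=[\theta(\vx),\theta(\vy)]=0$; this confirms that no separate argument is needed at the small prime.
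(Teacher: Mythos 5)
Your proof is correct and follows essentially the same route as the paper's: both reduce to the identity
$$\theta\bigl(\tfrac{1}{k!}\ad(\vx)^k(\vy)\bigr)=\sum_{i+j=k}\tfrac{(-1)^j}{i!\,j!}\theta(\vx)^i\theta(\vy)\theta(\vx)^j,$$
sum over $k\le p-1$, compare with the double sum coming from $e^{\theta(\vx)}\theta(\vy)e^{-\theta(\vx)}$, and then observe that the missing terms with $i+j\ge p$ vanish under the over-restricted hypothesis. The only cosmetic difference is that you invoke the binomial expansion of $\ad(A)^k=(L_A-R_A)^k$ directly as a known associative-algebra identity, whereas the paper establishes the same formula by an explicit induction on $k$; your version is slightly slicker but mathematically identical. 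You also spell out the small combinatorial estimate (if $i+j\ge p$ and $0\le i,j\le p-1$ then $\max(i,j)\ge\lfloor(p+1)/2\rfloor$) that the paper compresses into the phrase ``all missing terms are actually zero,'' which is a welcome clarification rather than a deviation.
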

\begin{proof}
First, observe by induction that for each $k=1,2,\ldots, p-1$
$$
\theta(\frac{1}{k!}ad(\vx)^k(\vy)) =
\sum_{j=0}^k \frac{(-1)^j}{(k-j)!j!} \theta(\vx)^{k-j}\theta(\vy)\theta(\vx)^j
\;
.
$$
For $k=1$ this is just the definition of a representation:
$$
\theta(ad(\vx)(\vy)) = \theta([\vx,\vy]) = \theta(\vx)\theta(\vy)- \theta(\vy)\theta(\vx)
.
$$
Going from $k$ to $k+1$, 
\begin{align*}
\theta(\frac{1}{(k+1)!}ad(\vx)^{k+1}(\vy)) = 
&
\frac{1}{k+1} ( \theta (\vx) \theta(\frac{1}{k!}ad(\vx)^k(\vy))- \theta(\frac{1}{k!}ad(\vx)^k(\vy)) \theta (\vx))
\\
= \sum_{j=0}^k \frac{(-1)^j}{k+1} 
\bigg(\frac{1}{(k-j)!j!} \theta(\vx)^{k-j+1}\theta(\vy)\theta(\vx)^j 
& -
\frac{1}{(k-j)!j!} \theta(\vx)^{k-j}\theta(\vy)\theta(\vx)^{j+1}\bigg)
\\
= \frac{1}{(k+1)!}\theta(\vx)^{k+1}\theta(\vy) +
&
\sum_{i=1}^{k} \frac{(-1)^i}{(k+1)(k-i)!(i-1)!}
\bigg(\frac{1}{i} +  
\frac{1}{k+1-i}\bigg) \cdot
\\
\cdot\, 
\theta(\vx)^{k+1-i}\theta(\vy)\theta(\vx)^{i}
+ \frac{(-1)^{k+1}}{(k+1)!}\theta(\vy)\theta(\vx)^{k+1}
&
=
\sum_{i=0}^{k+1} \frac{(-1)^i}{(k+1-i)!i!} \theta(\vx)^{k+1-i}\theta(\vy)\theta(\vx)^i \; .
\end{align*}

Finally,
\begin{align*}
\theta(e^{ad(\vx)}(\vy))= 
\sum_{k=0}^{p-1} \theta(\frac{1}{k!}ad(\vx)^k(\vy))
&
=
\sum_{i+j=0}^{p-1} \frac{(-1)^j}{i!j!} \theta(\vx)^{i}\theta(\vy)\theta(\vx)^j
=
\\
\sum_{i,j=0}^{p-1} \frac{(-1)^j}{i!j!} \theta(\vx)^{i}\theta(\vy)\theta(\vx)^j
&
=
\Big( \sum_{i=0}^{p-1} \frac{1}{i!}
\theta(\vx)^{i}\Big)
\theta(\vy)
\sum_{j=0}^{p-1} \frac{(-1)^j}{j!} \theta(\vx)^j
=
e^{\theta(\vx)}\theta (\vy) e^{-\theta(\vx)},
\end{align*}
where the third equality holds because $(V,\theta)$ is over-restricted:
all missing terms are actually zero.
\end{proof}

The second vital example of a pseudo-Chevalley group
is $G_\fg$, procured from the adjoint representation $(\fg, ad)$.
This group is intricately connected with
the pseudo-Chevalley groups of over-restricted representations:
\begin{prop}\label{surj_hom}
If $(V,\theta)$ is a faithful 
over-restricted representation of $\fg$, 
then the assignment 
$$
\phi: e^{\theta (N_p (\fg))}\rightarrow G_\fg, \ \
\phi (e^{\theta (\vx)}) = e^{ad(\vx)},  \ \
\vx\in N_p(\fg)
$$
extends to a surjective homomorphism of groups
$\phi: G_V\rightarrow G_\fg$
whose kernel is central and consists of $\fg$-automorphisms
of $V$.
\end{prop}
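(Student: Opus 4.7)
The plan is to realise $\phi$ as conjugation on $\gl(V)$, restricted and pulled back to $\fg$. Since $\theta$ is faithful, identify $\fg$ with $\theta(\fg) \subset \gl(V)$. For $g \in G_V$, conjugation $c_g \colon A \mapsto gAg^{-1}$ acts on $\gl(V)$. Provided $c_g$ preserves $\theta(\fg)$, I can define $\Phi(g) \in \GL(\fg)$ by $\Phi(g)(\vy) := \theta^{-1}(g\, \theta(\vy)\, g^{-1})$. Being built from conjugation, $\Phi$ is automatically a group homomorphism from $G_V$ to $\GL(\fg)$ as soon as each $c_g$ stabilises $\theta(\fg)$.

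To verify this stability it suffices to check it on the generators $e^{\theta(\vx)}$ of $G_V$; this is precisely the content of Proposition~\ref{abs_chev}, which gives $e^{\theta(\vx)}\theta(\vy) e^{-\theta(\vx)} = \theta(e^{ad(\vx)}(\vy))$. The same formula identifies $\Phi(e^{\theta(\vx)})$ with $e^{ad(\vx)}$, so $\Phi$ restricts to the prescribed assignment on the generating set and hence extends it to a homomorphism defined on all of $G_V$. Surjectivity onto $G_\fg$ is then immediate: the image contains every $e^{ad(\vx)}$ (so contains $G_\fg$), while generators of $G_V$ map into $G_\fg$ (so the image is contained in $G_\fg$).

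For the kernel, $g \in \ker \Phi$ iff $c_g$ fixes $\theta(\fg)$ pointwise, iff $g$ commutes with every $\theta(\vy)$. Such an invertible $g$ is precisely a $\fg$-automorphism of $V$. For centrality, an element commuting with each $\theta(\vx)$ also commutes with the polynomial $e^{\theta(\vx)} = \sum_{k=0}^{p-1} \theta(\vx)^k/k!$, hence with all generators of $G_V$ and therefore with $G_V$ itself. The only place over-restrictedness really enters is through the invocation of Proposition~\ref{abs_chev}; the remaining steps are bookkeeping on the conjugation action, and I do not anticipate a serious obstacle beyond being careful that $\Phi$ is defined on the group $G_V$ (and not merely on its generating set) via the conjugation construction rather than an \emph{ad hoc} extension from the generators.
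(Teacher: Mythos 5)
Your proof is correct, and it takes a genuinely different (and slicker) route than the paper. The paper's proof fixes generators $\vx_1,\ldots,\vx_m\in N_p(\fg)$, realises both $G_V$ and $G_\fg$ as quotients of a free product $H=(\bK,+)^{\ast m}$, and shows $\ker(H\twoheadrightarrow G_V)\subseteq\ker(H\twoheadrightarrow G_\fg)$ (using Proposition~\ref{abs_chev} and faithfulness of $\theta$), thereby proving that the assignment on generators factors through $G_V$. You sidestep the well-definedness issue entirely by first constructing a genuine homomorphism $\Phi: G_V\to\GL(\fg)$, $\Phi(g)(\vy)=\theta^{-1}(g\,\theta(\vy)\,g^{-1})$, and only afterwards matching its values on generators with $e^{ad(\vx)}$ via Proposition~\ref{abs_chev}; the set of $g$ for which $c_g$ stabilises $\theta(\fg)$ is automatically a subgroup (restriction is injective, hence bijective, on the finite-dimensional subspace $\theta(\fg)$, so it is also closed under inverses), so checking on generators suffices. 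The conjugation construction is arguably cleaner for this proposition, but the paper's free-product scaffolding is chosen because it is reused essentially verbatim in Theorem~\ref{extension_gKM} (Kac-Moody groups) and, more importantly, because the explicit surjections $f,\widehat f:\bK^m\to G_V,G_\fg$ from a common $\bK^m$ are precisely what is needed in the proof of Theorem~\ref{extension_g}(1) to show $\phi$ is a morphism of \emph{algebraic} groups. Your argument proves the abstract-group statement economically but would leave the algebraicity of $\phi$ to be established by a separate mechanism.
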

\begin{proof}
Proposition~\ref{prop_GV} yields
the elements $\vx_1, \ldots, \vx_n \in N_p (\fg)$
for $G_V$ and the elements $\vx_{n+1}, \ldots, \vx_m \in N_p (\fg)$
for $G_\fg$.
Combining these elements together, we get surjective algebraic maps
with common domain:
    $$
    f: \bK^m \rightarrow G_V, \
    \widehat{f}: \bK^m \rightarrow G_\fg, \ \
    f\Big( (a_k) \Big) = \prod_k e^{\theta (a_k \vx_k)}, \
    \widehat{f}\Big( (a_k) \Big) = \prod_k e^{ad(a_k \vx_k)}\; .
  $$
Let   $H= (\bK,+)^{\ast m}$ be the free product of $m$ additive groups.
The maps $f$ and $\widehat{f}$ extend to surjective group homomorphisms
    $$
    f^\sharp: H \rightarrow G_V, \
    \widehat{f}^\sharp : H \rightarrow G_\fg
    $$
so that both $G_V$ and $G_\fg$ are quotients of $H$ as abstract groups.
Consider an element of the kernel
$a_1\ast\ldots \ast a_k \in \ker (f^\sharp)$
where $a_i$ belongs to the $t(i)$-th component of the free product.
Clearly, 
$$
I_V =
f^\sharp (a_1\ast\ldots \ast a_k)
=
e^{\theta (a_1\vx_{t(1)})}e^{\theta (a_2\vx_{t(2)})} \ldots e^{\theta (a_k\vx_{t(k)})} 
\; .
$$ 
Proposition~\ref{abs_chev} tells us 
that 
$$
\theta(e^{ad(a_1\vx_{t(1)})}e^{ad(a_2\vx_{t(2)})}\ldots e^{ad(a_k\vx_{t(k)})}(\vy))= \theta (\vy)
\ 
\mbox{ for all }
\
\vy \in \fg.$$
Since $\theta$ is injective it follows that 
$e^{ad(a_1\vx_{t(1)})}\ldots e^{ad(a_k\vx_{t(k)})} = I_\fg$,   
so $a_1\ast\ldots \ast a_k \in \ker (\widehat{f}^\sharp)$. 
It follows that the homomorphism $\phi$ is well-defined.

Consider $A=
e^{\theta (a_1\vx_{t(1)})} \ldots e^{\theta (a_k\vx_{t(k)})} 
\in \ker (\phi)$.
By
Proposition~\ref{abs_chev},
$\theta(\vy) = \theta (\phi (A) (\vy)) = A\theta (\vy) A^{-1}$
for all $\vy\in\fg$.
Hence, $A\in \Aut_\fg (V)$,
so that $A$ commutes with all $\theta(\vy)$.
Consequently, $A$ commutes
with all $e^{\theta (\vx)}$, which are generators of $G_V$. Hence, $A$ is central.
\end{proof}

It is natural to inquire whether the homomorphism $\phi$
is a homomorphism of algebraic groups.
To prove this,
we need a technical result, Theorem~\ref{generic_smooth}
about
generic smoothness of polynomial
maps in positive characteristic,
established in the appendix (Section~\ref{Appendix}).
We include the answer to this natural question
in the main result of this section:

\begin{theorem} \label{extension_g}
  Suppose that 
  the field $\bK$ is algebraically  closed.
  The following statements hold for a faithful 
  over-restricted finite-dimensional representation $(V,\theta)$ of
  a finite-dimensional restricted Lie algebra $\fg$:
  \begin{enumerate}
  \item The map $\phi: G_V\rightarrow G_\fg$ 
    constructed in Proposition~\ref{surj_hom}  
    is a homomorphism of algebraic groups.
  \item The Lie algebra $\Lie (G_V)$ is isomorphic to $\theta (\fg_0)$
    where $\fg_0$ is the Lie subalgebra of $\fg$, 
    generated by all $\vx\in N_p(\fg)$.
    Moreover, $\fg_0$ is a restricted Lie subalgebra of $\fg$. 
  \item The differential $\vd_1 \eta$ of the natural representation
    $\eta: G_V\hookrightarrow \GL(V)$ is equal to
    $\theta |_{\fg_0}$.
  \item The differential $\vd_1 \phi$ is surjective.
    Its kernel is $\fg_0 \cap Z( \fg)$ where $Z(\fg)$ is the centre.
    \item The scheme-theoretic kernel $\ker \phi$
    is a subgroup scheme of $\Aut_\fg (V)$, central in $G_V$.
  \item If $Z(\fg)=0$,
    then $\ker\phi$ is discrete.
      \end{enumerate}
\end{theorem}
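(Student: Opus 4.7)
The plan is to deduce (6) from (4) via the standard correspondence between discreteness of a finite-type group scheme and vanishing of its Lie algebra. By (5), $\ker\phi$ is a closed subgroup scheme of the algebraic group $G_V$ (Proposition~\ref{prop_GV}), hence of finite type over $\bK$. For such a scheme over an algebraically closed field, discreteness is equivalent to \'etaleness, which is in turn equivalent to $\Lie(\ker\phi) = 0$.

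It therefore suffices to show $\Lie(\ker\phi) = 0$ under the hypothesis $Z(\fg) = 0$. The Lie algebra of the scheme-theoretic kernel of a homomorphism of algebraic group schemes is the kernel of the differential at the identity, as both represent the same functor of $\bK[\epsilon]/(\epsilon^2)$-points lying above the identity. Combined with (4), this yields
$$\Lie(\ker\phi) = \ker \vd_1\phi = \fg_0 \cap Z(\fg),$$
which vanishes when $Z(\fg) = 0$.

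The substantive work is already contained in (4); part (6) is then a corollary requiring only the above bookkeeping, and I anticipate no serious obstacle. For completeness, the invoked standard fact can be argued directly: $\Lie(\ker\phi) = 0$ gives $\mathfrak{m}_e/\mathfrak{m}_e^2 = 0$, so by Nakayama the local ring at the identity of $\ker\phi$ reduces to $\bK$; translation by closed $\bK$-points propagates this identification to every closed point, and finite type forces their number to be finite, so $\ker\phi$ is a finite disjoint union of reduced points.
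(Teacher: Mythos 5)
Your proposal addresses only part~(6) of the theorem, and it does so correctly; but parts (1)--(5) are not touched, and those carry essentially all of the weight. In particular, (1) and (2) are the substantive claims: the paper establishes them by combining \cite[Prop~I.2.2]{Bor} with the generic-smoothness result Theorem~\ref{generic_smooth} from the appendix (applied to the polynomial map $f^\prime$, whose component degrees are less than $p$ by construction) to show that the projection from the graph of $\phi$ onto $G_V$ is an isomorphism of algebraic groups, and then computes $\Lie(G_V)$ by a left-translation argument together with Proposition~\ref{abs_chev}. None of that is present in your write-up, so as a proof of the theorem it has a genuine and large gap.

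For (6) itself, your argument is correct and in spirit identical to the paper's: both reduce to the fact that, under $Z(\fg)=0$, part~(4) forces $\ker(\vd_1\phi)=\fg_0\cap Z(\fg)=0$, whence the scheme-theoretic kernel is \'etale, i.e.\ discrete. You make explicit the identity $\Lie(\ker\phi)=\ker(\vd_1\phi)$ and the Nakayama step, which the paper leaves implicit. The paper additionally remarks that $G_V$ is connected (generated by the connected set $e^{\theta(N_p(\fg))}$ through the identity), but as your version shows, that observation is not actually needed to conclude discreteness of $\ker\phi$: injectivity of $\vd_1\phi$, not its bijectivity or connectedness of the source, is what matters. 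So for the one part you did prove, your route is, if anything, slightly cleaner; it just leaves the rest of the theorem unproved.
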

\begin{proof}
  (1) On top of the surjective maps
  $f: \bK^m \rightarrow G_V$
  and
  $\widehat{f}: \bK^m \rightarrow G_\fg$,
  utilised in  Proposition~\ref{surj_hom},
by \cite[Prop I.2.2]{Bor}  
we can find 
    $\vx_{m+1},\vx_{m+2} \ldots, \vx_k \in N_p (\fg)$
such that the image $G$ of the map 
    $$
    \widetilde{f}: \bK^k \rightarrow G_V \times G_{\fg}, \ \
    f(a_1,a_2, \ldots , a_k) =
(e^{\theta (a_1 \vx_1)}\cdots e^{\theta (a_k \vx_k)} , e^{ad (a_1 \vx_1)}\cdots e^{ad (a_k\vx_k)}) 
  $$
is a closed algebraic subgroup of $G_V \times G_{\fg}$. 
Extending $f$ and $\widehat{f}$ in the obvious way to maps
$f^\prime$ and $\widehat{f}^\prime$ defined on $\bK^k$,
we see that   $\widetilde{f} = (f^\prime,\widehat{f}^\prime)$.
Hence, $G$ is the graph of the group homomorphism $\phi : G_V \rightarrow G_\fg$.

Moreover, the first projection $\pi_1 : G \rightarrow G_V$
  is bijective. 
Since $f^\prime$ is given by polynomials
  of degree less than $p$ by construction,
  Theorem~\ref{generic_smooth}
  ensures that $f^\prime$ is generically smooth.
  Since $\vd \pi_1 \circ \vd \widetilde{f} = \vd f^\prime$,
  the differential $\vd \pi_1$ is surjective at some point.
  Since $\pi_1$ is a morphism of algebraic groups,
  the differential $\vd \pi_1$ is surjective at all points.
  Hence, $\pi_1$ is an isomorphism of algebraic groups.
  Consequently, $\phi$ is a morphism of algebraic varieties (or groups) 
  since $\phi = \pi_2 \pi_1^{-1}$. 

  (2)
  Let $\fg_1$ be the linear span of all $\vx\in N_p(\fg)$.
  Let $(z_1, \ldots , z_k)$ be the standard coordinates on $\bK^k$.
  For all $i=1, \ldots k$   the calculation
$$
\vd_0 f^\prime (\frac{\partial}{\partial z_i})=
\frac{d}{dt}e^{\theta (t\vx_i)}\vert_{t=0} = \theta (\vx_i)
$$
implies that $\Lie (G_V) \supseteq
{\mathrm{Im}} (\vd_0 f^\prime ) = \theta (\fg_1)$.
It follows that 
$\Lie (G_V) \supseteq \theta (\fg_0)$.
  
By  Theorem~\ref{generic_smooth}, 
the differential $\vd_a f^\prime$ is surjective at some point $a\in \bK^k$.
If $L_a : G_V \rightarrow G_V$ is the left multiplication by $f^\prime (a)^{-1}$,
  then the Lie algebra $\Lie (G_V)$ is spanned by the elements
\begin{align*}  
\vd_{f^\prime(a)} L_a\big(  \vd_a f^\prime (\frac{\partial}{\partial z_i})\big) =
\vd_{f^\prime(a)} L_a\big(\frac{d}{dt}e^{\theta (a_1\vx_1)}\ldots
&
e^{\theta (a_{i-1}\vx_{i-1})}e^{\theta ((a_i+t)\vx_i)}e^{\theta (a_{i+1}\vx_{i+1})}\ldots  \vert_{t=0} \big) =
\\
\vd_{f^\prime(a)} L_a\big(
e^{\theta (a_1\vx_1)}\ldots e^{\theta (a_{i-1}\vx_{i-1})}e^{\theta (a_i\vx_i)}\theta(\vx_i)
&
e^{\theta (a_{i+1}\vx_{i+1})}\ldots \big)
=
e^{-\theta (a_n\vx_n)}\ldots
e^{-\theta (a_{i+1}\vx_{i+1})}\theta(\vx_i)
\cdot
\\
e^{\theta (a_{i+1}\vx_{i+1})}\ldots e^{\theta (a_{n}\vx_{n})}
=
\theta \big( e^{-ad (a_n\vx_n)}\ldots
&
e^{-ad (a_{i+1}\vx_{i+1})} (\vx_i) \big). 
\end{align*}
The last equality holds because of Proposition~\ref{abs_chev}. 
The element
$e^{-ad (a_n\vx_n)}\ldots
e^{-ad (a_{i+1}\vx_{i+1})} (\vx_i)$
belongs to $\fg_0$ since all $\vx_j$ belong there.
Hence, this calculation shows $\Lie (G_V) \subseteq \theta (\fg_0)$.

It remains to argue that $\fg_0$ is a restricted Lie subalgebra of $\fg$.
This is true because $\theta$ is an injective
homomorphism of restricted Lie algebras,
and both $\theta (\fg_0)=\Lie (G_V)$  and $\theta (\fg)$
are restricted subalgebras of $\gl (V)$. 

(3) It follows from the same 
calculation as just above for $\vx\in N_p (\fg)$:
  $$
\vd_1 \eta (\vx) = 
  \frac{d}{dt}e^{\theta (t\vx)}\vert_{t=0} = \theta (\vx). 
  $$
  
  (4)
  The same argument as in (1) shows that $\vd_1 \pi_2$ is surjective.
  Hence, $\vd_1 \phi = \vd_1\pi_2 \circ \vd_1 \pi_1^{-1}$
  is surjective as well.

  The second statement follows from the observation that
  $\vd_1 \phi = ad|_{\fg_0}$. This can be checked on elements
  $\vx \in N_p (\fg)$ since they span $\fg_0$:
    $$
  \vd_1 \phi (\vx)=
  \frac{d}{dt}e^{ad (t\vx)}\vert_{t=0} = ad (\vx)\, . 
  $$

  (5) This follows from
Proposition~\ref{surj_hom}.

(6) It follows from (4) that  
the differential $\vd_1\phi : \Lie(G_V)\rightarrow \Lie (G_\fg)$
is an isomorphism of Lie algebras.
Observe that $G_V$ is connected because it is generated as a group
by a connected set $e^{\theta(N_p (\fg))}$ containing the identity element. 
Hence, the kernel of $\phi$ is discrete.
\end{proof}

Let us state an immediate, rather curious corollary
of the proof of part~{(2)}:
\begin{cor} Let $\fg$ be a finite-dimensional restricted Lie algebra
  over an algebraically closed field that admits a faithful over-restricted
  representation. Let $\fg_1$ be the span of $N_p (\fg)$.
  The following statements in the notation
  of the proof of Theorem~\ref{extension_g}{.(2)} are equivalent:
  \begin{enumerate}
  \item $\fg_1$ is a restricted Lie subalgebra,
  \item for some choice of $\theta$ and $f^\prime$, the differential
    $\vd_0 f^\prime$ is surjective,
  \item for all choices of $\theta$ and $f^\prime$, the differential
    $\vd_0 f^\prime$ is surjective.
  \end{enumerate}  
\end{cor}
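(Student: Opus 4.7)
My plan is to show that all three conditions are equivalent to the intrinsic equality $\fg_0 = \fg_1$, where $\fg_0 \supseteq \fg_1$ is the restricted Lie subalgebra generated by $N_p(\fg)$ as identified in Theorem~\ref{extension_g}.(2).

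I first handle $(1) \Leftrightarrow \fg_0 = \fg_1$. On the one hand, $\fg_0$ is always a restricted Lie subalgebra by Theorem~\ref{extension_g}.(2), so $\fg_0 = \fg_1$ instantly gives $(1)$. On the other hand, any Lie subalgebra containing $N_p(\fg)$ must contain the subalgebra $\fg_0$ that $N_p(\fg)$ generates; since $\fg_1 \subseteq \fg_0$ by definition, the assumption that $\fg_1$ is a (restricted, or even just plain) Lie subalgebra of $\fg$ forces $\fg_1 = \fg_0$.

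For $(2)$ and $(3)$, I would import the computation already carried out inside the proof of Theorem~\ref{extension_g}.(2). For any list $\vx_1, \ldots, \vx_k \in N_p(\fg)$ producing $f^\prime$, the identity $\vd_0 f^\prime(\partial / \partial z_i) = \theta(\vx_i)$ gives $\mathrm{Im}(\vd_0 f^\prime) = \theta(\mathrm{span}(\vx_1, \ldots, \vx_k)) \subseteq \theta(\fg_1)$. Combined with $\Lie(G_V) = \theta(\fg_0)$ from the same theorem, and the injectivity of $\theta$, surjectivity of $\vd_0 f^\prime$ onto $\Lie(G_V)$ amounts to $\mathrm{span}(\vx_1, \ldots, \vx_k) = \fg_0$; in particular this forces $\fg_1 \supseteq \fg_0$, whence $\fg_1 = \fg_0$. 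This settles $(2) \Rightarrow \fg_0 = \fg_1$ and \emph{a fortiori} $(3) \Rightarrow \fg_0 = \fg_1$.

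Conversely, given $\fg_1 = \fg_0$, I pick the $\vx_i$ to include a basis of $\fg_1$ (drawn from $N_p(\fg)$) and, by Borel's Proposition~I.2.2, enlarge the list if necessary to keep $f$ surjective onto $G_V$; the resulting $f^\prime$ then satisfies $\mathrm{Im}(\vd_0 f^\prime) = \theta(\fg_1) = \Lie(G_V)$. The hard part will be pinning down the quantifier \emph{for all choices} in $(3)$: the cleanest reading, implicit in the proof of Theorem~\ref{extension_g}.(2), is that the list $\vx_1, \ldots, \vx_k$ always includes a basis of $\fg_1$. Under this convention the surjectivity condition depends only on the intrinsic identity $\fg_0 = \fg_1$ and not on the particular $\theta$ or $f^\prime$, so $(2) \Rightarrow (3)$ follows at once.
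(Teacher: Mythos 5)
Your proof is correct and is precisely the argument the paper has in mind when it calls the statement ``an immediate \ldots corollary of the proof of part~(2)'': reduce all three conditions to the intrinsic identity $\fg_0 = \fg_1$, using $\vd_0 f'(\partial/\partial z_i) = \theta(\vx_i)$, $\Lie(G_V) = \theta(\fg_0)$, and the injectivity of $\theta$. Your observation that the quantifier ``for all choices'' in~(3) only works under the tacit convention that the chosen $\vx_i$'s span $\fg_1$ (which the paper already assumes when it writes $\mathrm{Im}(\vd_0 f') = \theta(\fg_1)$ in the proof of Theorem~\ref{extension_g}.(2)) is a genuine and useful precision: without it one could take $\fg = \sl_2$ with $p \geq 5$ and build a surjective $f'$ from copies of $\ve$ and $\vf$ alone, so that $\mathrm{Im}(\vd_0 f') = \theta(\mathrm{span}(\ve,\vf))$ is two-dimensional and $\vd_0 f'$ fails to be surjective even though $\fg_1 = \fg_0 = \sl_2$.
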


  
Our terminology of pseudo-Chevalley groups
is justified by the following example:
consider the adjoint representation $\fg$ of a semisimple algebraic
group $G$.
Then,
barring accidents in small characteristic, (for instance, if $p\geq 5$), 
$G_\fg$ is precisely the adjoint Chevalley group $G_{ad}$.
Notice that the Chevalley group $G_{ad}$
is generated by the exponentials of root vectors $\ve_\alpha$.
In characteristic zero $ad_{\bZ} (\ve_\alpha)^4=0$, while
in positive characteristic $ad (\ve_\alpha)^p=0$
so the exponentials could be different. For instance,
if $G$ is of type $G_2$ in characteristic 3, then
the Chevalley exponential $e_{\bZ}^{\ve_\alpha}$
of the short root vector $\ve_\alpha$ contains
the divided-power term $ad_\bZ (\ve_\alpha^{(3)})$
but our exponential stops at $ad(\ve_\alpha)^2/2$.
Similar difficulty appears for all groups in characteristic 2.
It would be interesting to investigate this question further:
what is the precise relation between $G_\fg$ and $G_{ad}$
for simple algebraic groups in characteristic 2
(and the type $G_2$ group in characteristic 3).

Let us contemplate applications of Theorem~\ref{extension_g}
to integration of representations.
Suppose $\fg = \Lie (G)$
where $G$ is a connected algebraic group
(over an algebraically closed field $\bK$). 
The adjoint group $G_{ad}$ is defined as
the image of 
the adjoint representation $\Ad: G \rightarrow \GL (\fg)$.
Notice that $G_{ad}$ is closed because
the image of a morphism of algebraic groups is closed \cite[I.1.4]{Bor}.
We can compare  $G_{ad}$ and $G_\fg$
as sets because both are algebraic subgroups of $\GL (\fg)$.

\begin{cor} \label{extension_g1}
  Suppose that $G_{ad} = G_\fg$. 
  The following statements hold for a faithful 
  over-restricted finite-dimensional representation $(V,\theta)$ of
  $\fg = \Lie (G)$:
  \begin{enumerate}
  \item The representation $(V,\theta)$ yields a rational
    representation $(V,\Theta)$
    of a central extension (that happens to be $G_V$)
    of $G_{ad}$ such that $\vd_1 \Theta (\vx)= \theta (\vx)$
    for all $\vx \in \fg_0$. 
  \item If $(V,\theta)$ is a brick (i.e., $\End_\fg V = \bK$),
    then $(V,\theta)$ yields a rational
    projective representation of $G_{ad}$
   such that $\vd_1 \Theta (\vx)= \theta (\vx)$
    for all $\vx \in \fg_0$. 
  \end{enumerate}
\end{cor}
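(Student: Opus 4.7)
The plan is to harvest the structure already established by Theorem~\ref{extension_g} and Proposition~\ref{surj_hom}, the hypothesis $G_{ad}=G_\fg$ doing essentially all the bookkeeping needed to identify the target of $\phi$ with the adjoint group of $G$.

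For part (1), I would take $\Theta$ to be the natural embedding $\eta : G_V \hookrightarrow \GL(V)$, which is a rational representation by Theorem~\ref{extension_g}.(1) (since $G_V$ is an algebraic subgroup of $\GL(V)$). Proposition~\ref{surj_hom} together with Theorem~\ref{extension_g}.(1) give a surjective morphism of algebraic groups $\phi : G_V \twoheadrightarrow G_\fg$ with central kernel; under the hypothesis $G_\fg = G_{ad}$ this exhibits $G_V$ as a central extension of $G_{ad}$, as claimed. The condition $\vd_1 \Theta(\vx) = \theta(\vx)$ for $\vx \in \fg_0$ is then exactly Theorem~\ref{extension_g}.(3).

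For part (2), the point is to descend $\Theta$ to a projective representation of $G_{ad}$. By Proposition~\ref{surj_hom} the kernel of $\phi$ lies in $\Aut_\fg(V)$, and since $(V,\theta)$ is a brick we have $\End_\fg(V)=\bK$, so $\Aut_\fg(V) = \bK^\times \cdot I_V \leq Z(\GL(V))$. Composing $\Theta$ with the canonical projection $\GL(V)\to\PGL(V)$ therefore kills $\ker \phi$, yielding a rational homomorphism $G_{ad} \cong G_V/\ker\phi \longrightarrow \PGL(V)$, i.e.\ a rational projective representation of $G_{ad}$. Taking differentials commutes with the quotient by the central subgroup $\bK^\times I_V$, so the differential condition transfers from part (1) to the projective setting (with $\theta(\vx)$ understood modulo $\bK \cdot I_V$, the Lie algebra of the scalars).

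There is no genuine obstacle here beyond unpacking the hypothesis: the content is entirely packaged in Theorem~\ref{extension_g} and Proposition~\ref{surj_hom}, and the only mild subtlety is in (2), where one must notice that the brick condition forces $\ker \phi$ into the scalar matrices so that the composition with $\GL(V) \to \PGL(V)$ factors through $G_{ad}$.
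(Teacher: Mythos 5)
Your argument is correct and is exactly the intended derivation: the paper gives no explicit proof of Corollary~\ref{extension_g1}, treating it as an immediate consequence of Theorem~\ref{extension_g} and Proposition~\ref{surj_hom} together with the identification $G_{ad}=G_\fg$, which is precisely how you proceed. For part (1) taking $\Theta=\eta$ and invoking Theorem~\ref{extension_g}.(1),(3),(5) is exactly right; for part (2) the only point that needs care is that $\ker\phi$ be a \emph{scheme-theoretic} subgroup of the scalars $\mathbb{G}_m\cdot I_V$ so that the composite $G_V\hookrightarrow\GL(V)\to\PGL(V)$ factors through $G_V/\ker\phi\cong G_{ad}$, and you correctly get this from Theorem~\ref{extension_g}.(5) combined with the brick hypothesis (which forces $\Aut_\fg(V)=\mathbb{G}_m$ as a group scheme, not merely on $\bK$-points). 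Your closing remark that the differential condition in (2) is to be read modulo scalars is the right reading of the statement.
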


We finish the section with an application to semisimple groups.
Notice that it is true in characteristic 2 because
over-restricted representations are direct sums of the trivial representation.
\begin{cor} \label{extension_g11}
Suppose that $G$ is a connected simply-connected
  semisimple algebraic group such that $Z(\fg)=0$.
  Assume further that if $p=3$, then
  $G$ has no components of type $G_2$.
Then a faithful   over-restricted finite-dimensional representation $(V,\theta)$ of $\fg$
integrates to a rational representation of $G$.
\end{cor}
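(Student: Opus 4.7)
The plan is to invoke Corollary~\ref{extension_g1}(1) and then lift through an isogeny using the simply-connectedness of $G$.

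The case $p = 2$ is disposed of first and is almost vacuous: here $\lfloor (p+1)/2 \rfloor = 1$, so over-restrictedness forces $\theta(\vx) = 0$ for every $\vx \in N_p(\fg)$; since $\fg$ is semisimple the root vectors (which are $p$-nilpotent) together with their brackets span $\fg$, so $\theta = 0$ and $V$ integrates as the trivial $G$-module. This matches the parenthetical remark preceding the statement.

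For $p \geq 3$ (and no $G_2$ component when $p = 3$), I would first verify the hypothesis $G_\fg = G_{ad}$ of Corollary~\ref{extension_g1}. As observed in the excerpt, the only possible discrepancy between our exponential $\sum_{k<p} ad(\ve_\alpha)^k/k!$ on a root vector and the classical Chevalley exponential comes from divided-power contributions $ad_\bZ(\ve_\alpha)^{(k)}$ with $k \geq p$. Under the assumed hypotheses the nilpotency index of $ad(\ve_\alpha)$ on $\fg$, bounded by the length of the longest root string in the direction $\alpha$, does not exceed $p$; hence the tail vanishes and $G_\fg$ equals the adjoint Chevalley group $G_{ad}$. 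Corollary~\ref{extension_g1}(1) then yields a rational representation $\Theta : G_V \to \GL(V)$ with $\vd_1 \Theta = \theta$ on $\fg_0$; since root vectors generate $\fg$ and lie in $N_p(\fg)$, one has $\fg_0 = \fg$, so $\Theta$ genuinely integrates $\theta$.

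The final step is to lift $\Ad : G \to G_{ad}$ along $\phi : G_V \to G_\fg = G_{ad}$. The hypothesis $Z(\fg) = 0$ and Theorem~\ref{extension_g}(6) together ensure that $\ker \phi$ is discrete and central, so $\phi$ is an étale central isogeny of connected algebraic groups. Since $G$ is connected simply-connected semisimple and $\Ad$ is a central isogeny onto $G_{ad}$, the universal property of the simply-connected cover produces a unique morphism of algebraic groups $\psi : G \to G_V$ with $\phi \circ \psi = \Ad$, and $\Theta \circ \psi : G \to \GL(V)$ is the sought rational representation.

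I expect the principal obstacle to lie in the second step: identifying $G_\fg$ with $G_{ad}$ demands a characteristic-by-characteristic comparison between our truncated exponentials and the classical Chevalley ones, and it is precisely this comparison that forces the exclusion of $G_2$ when $p = 3$. The concluding lift is then routine given the Theorem~\ref{extension_g}(6) assertion that $\ker \phi$ is étale, which itself rests essentially on the $Z(\fg) = 0$ hypothesis.
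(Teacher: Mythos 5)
Your proposal follows the intended route: verify $G_\fg = G_{ad}$, invoke Corollary~\ref{extension_g1}(1) together with $\fg_0=\fg$, use $Z(\fg)=0$ and Theorem~\ref{extension_g}(6) to conclude that $\phi$ is an \'etale central isogeny, and lift $\Ad$ through $\phi$ by the universal property of the simply-connected cover. The paper does not print a proof of this corollary, but the paragraph preceding Corollary~\ref{extension_g1} (identifying $G_\fg$ with the adjoint Chevalley group, flagging $G_2$ in characteristic $3$ and all types in characteristic $2$ as the exceptions) makes clear that this is exactly what the authors have in mind.

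Two small remarks. First, for $p=2$ the conclusion is best stated as vacuous rather than as integration to the trivial module: over-restrictedness gives $\theta|_{N_p(\fg)}=0$, hence $\theta=0$ since the $\ve_\alpha$ generate $\fg$, which is incompatible with faithfulness unless $\fg=0$. Second, the identification $G_\fg=G_{ad}$ asks for slightly more than you supply. Beyond checking that the truncated exponential $e^{ad(t\ve_\alpha)}$ agrees with the Chevalley generator $X_\alpha(t)$ when the root-string lengths keep the divided-power tail below degree $p$ (which is what fails for $G_2$ at $p=3$ and for everything at $p=2$), one must also argue that $e^{ad(\vx)}\in G_{ad}$ for an arbitrary $\vx\in N_p(\fg)$, since $G_\fg$ is generated over the whole $p$-nilpotent cone and not merely over root vectors; conjugating $\vx$ into a unipotent radical and exponentiating there (or appealing to $G_{ad}$ being the identity component of $\Aut(\fg)$) closes that gap. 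The paper itself passes over this point with a ``barring accidents'' remark, so your treatment is at the same level of rigour as the source.
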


\section{Kac-Moody Groups}\label{s1.N}
Let $\mA=(A_{i,j})_{n\times n}$ be a generalised Cartan matrix,
$\fg_\bC = \fg_\bC (\mA)$ its corresponding complex
Kac-Moody algebra.
The divided powers integral form
$\mU_\bZ$ 
of the universal enveloping algebra
$U(\fg_\bC)$
forges the Kac-Moody algebra over any commutative ring $\bA$: 
$$
\fg_\bZ \coloneqq \fg_\bC \cap \mU_\bZ \; , \ \ 
\fg_\bA \coloneqq \fg_\bZ \otimes_\bZ \bA \; . 
$$
It inherits a triangular decomposition
$\fg_\bA =
(\fn_{-} \otimes \bA)
\oplus
(\fh \otimes \bA)
\oplus
(\fn_{+} \otimes \bA)$
from
$\fg_\bZ =
\fn_{-}
\oplus \fh \oplus
\fn_{+}$.
If $\bK$ is a field of characteristic $p$,
the Lie algebra $\fg_\bK$
is restricted with the $p$-operation
$$
(\vh\otimes 1)^{[p]} = \vh\otimes 1, \ \ 
(\vx\otimes 1)^{[p]} = \vx^p \otimes 1
\ \mbox{ where } \
\vh\in \fh, \; \vx\in \fn_\pm
$$
where $\vx^p$ is calculated inside the associative $\bZ$-algebra
$\mU_\bZ$
\cite[Th. 4.39]{Marquis}.
In particular, $(\ve_\alpha\otimes 1)^{[p]} =0$
for any real root vector $\ve_\alpha$.

{\em The Kac-Moody group} is a functor $G_\mA$ from commutative rings to groups.
Its value on a field $\bF$ can be described
using the set of real roots $\Phi^{re}$: 
$$
G_\mA (\bF)
\; = \;
\Asterisk_{\alpha\in\Phi^{re}} \, U_\alpha
/ \langle \;\mbox{Tits' relations}\;\rangle
, \ \
U_\alpha = \{ X_\alpha (t)\, | \, t\in \bF \} \cong  \bF^+.
$$
There are different ways to write Tits' relations: the reader should consult
the classical papers \cite{CaCh,Tits}
for succinct presentations.

While the precise relations are peripheral to our deliberations,
the following fact is vital: 
\begin{center}
{\em the group $G_\mA (\bF)$ acts on the Lie algebra $\fg_\bF$ via the adjoint
action} \cite{Marquis,Remy}.
\end{center}
The adjoint action of each root subgroup $U_\alpha$ is exponential
over $\bZ$, reduced to the field $\bF$:
$$
\Ad (X_\alpha (t)) (\va \otimes 1)
=
`` \; \Exx (ad (\ve_\alpha \otimes t)) (\va \otimes 1) \; "
\coloneqq 
\sum_{n=0}^\infty \big( \, \frac{1}{n!} \, ad (\ve_\alpha)^{n} (\va) \otimes t^n \big).
$$
Observe that the latter sum is well-defined:
if $\va\in\fg_\bZ$ then $\frac{1}{n!} \, ad (\ve_\alpha)^{n} (\va)\in\fg_\bZ$.
The sum is actually finite:
by writing $\va = \sum_\beta \va_\beta$ as a sum of elements
from root subspaces we can see that there exists $N$ such that
$n\alpha +\beta$ is not a root for all $n>N$ and all $\beta$ so that, consequently,
$ad (\ve_\alpha)^{n} (\va) =0$ as soon as $n>N$. 
We denote the image of $\Ad$ by $G^{ad}_\mA (\bF)$
and call it {\em the adjoint Kac-Moody group}.

Let $\bK$ be a field of positive characteristic $p$.
Each real root $\alpha$ yields an additive family
of linear operators
(in the sense that $Y_\alpha (t+s)=Y_\alpha (t)Y_\alpha (s)$) 
on a restricted representation
$(V,\theta)$ of the Lie algebra $\fg_\bK$:
$$
Y_\alpha (t) \coloneqq e^{\theta (\ve_\alpha \otimes t)}
= \sum_{k=0}^{p-1} \frac{1}{k!}\, \theta (\ve_\alpha \otimes t)^k.
$$
By $G^{KM}_V$ we denote the group generated by $Y_\alpha (t)$
for all real roots $\alpha$ and $t\in\bK$.
Notice that  $G^{KM}_V$  is a subgroup of $G_V$, defined in Section~\ref{s1.1}.
If $p>\max_{i\neq j} (-A_{ij})$, then $\fg_\bK$ is generated by root vectors $\ve_\alpha$
\cite{Rou2} and, consequently, we expect that $G^{KM}_V=G_V$
for all over-restricted faithful representations.
It is an interesting problem to compare
$G^{KM}_V$  and $G_V$ for an arbitrary representation. 
If $(V,\theta)$ is over-restricted,
then Proposition~\ref{abs_chev}
applies:
\begin{equation}
  \label{eq1}
\theta \big( \Ad (X_{\alpha}(t))(\vy) \big)
=
Y_\alpha (t)
\theta (\vy)
Y_\alpha (-t) 
\end{equation}
for all $\vy\in\fg_\bK$.
Here is the main result of this section,
which is an adaptation of Proposition~\ref{surj_hom}
(cf. \cite[Theorem 1.2]{Ru} for a graded version of this result):
\begin{theorem} \label{extension_gKM}
If $(V,\theta)$ is a faithful
over-restricted representation of $\fg_\bK$,
then the assignment
$\phi (Y_\alpha (t) ) = \Ad (X_\alpha (t))$
extends to a surjective homomorphism of groups
$
\phi: G^{KM}_V\rightarrow G^{ad}_\mA (\bK) 
$, 
whose kernel is central and consists of $\fg_\bK$-automorphisms
of $V$.
\end{theorem}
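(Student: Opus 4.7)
My plan is to mimic the proof of Proposition~\ref{surj_hom}, working purely at the level of abstract groups since (unlike Theorem~\ref{extension_g}) no algebraic-variety structure is claimed for $G^{KM}_V$ or $G^{ad}_\mA(\bK)$. Let $H$ denote the free product of the additive groups $U_\alpha \cong \bK^+$ over $\alpha \in \Phi^{re}$. The assignments $X_\alpha(t) \mapsto Y_\alpha(t)$ and $X_\alpha(t) \mapsto \Ad(X_\alpha(t))$ restrict to group homomorphisms on each factor $U_\alpha$, so they lift by the universal property of the free product to surjective group homomorphisms $f^\sharp: H \to G^{KM}_V$ and $\widehat f^\sharp: H \to G^{ad}_\mA(\bK)$. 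Once I show $\ker(f^\sharp) \subseteq \ker(\widehat f^\sharp)$, the map $\phi$ is automatically well-defined and surjective, and agrees with the prescribed assignment on generators.

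The well-definedness step is exactly where equation~(\ref{eq1}) does the work. Since real root vectors $\ve_\alpha$ are $p$-nilpotent, $\ve_\alpha \otimes t \in N_p(\fg_\bK)$ and Proposition~\ref{abs_chev} delivers~(\ref{eq1}). Given any word $w = X_{\alpha_1}(t_1) \cdots X_{\alpha_k}(t_k) \in \ker(f^\sharp)$, so that $Y_{\alpha_1}(t_1) \cdots Y_{\alpha_k}(t_k) = I_V$, a straightforward induction on $k$ (applying~(\ref{eq1}) to peel off one factor at a time, from the rightmost inward) yields
$$Y_{\alpha_1}(t_1) \cdots Y_{\alpha_k}(t_k)\, \theta(\vy)\, Y_{\alpha_k}(-t_k) \cdots Y_{\alpha_1}(-t_1) = \theta\big(\Ad(X_{\alpha_1}(t_1)) \cdots \Ad(X_{\alpha_k}(t_k))(\vy)\big)$$
for every $\vy \in \fg_\bK$. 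The left-hand side reduces to $\theta(\vy)$, and faithfulness of $\theta$ forces $\Ad(X_{\alpha_1}(t_1)) \cdots \Ad(X_{\alpha_k}(t_k))(\vy) = \vy$ for all $\vy$, whence $w \in \ker(\widehat f^\sharp)$.

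The kernel of $\phi$ is handled by running the same identity in the opposite direction. Given $A = Y_{\alpha_1}(t_1) \cdots Y_{\alpha_k}(t_k) \in \ker(\phi)$, the hypothesis $\phi(A) = I_\fg$ combined with the displayed identity implies $A\,\theta(\vy)\,A^{-1} = \theta(\vy)$ for every $\vy \in \fg_\bK$, so $A$ is a $\fg_\bK$-automorphism of $V$. In particular $A$ commutes with each $\theta(\ve_\alpha \otimes t)$, and hence with each polynomial $Y_\alpha(t) = \sum_{k=0}^{p-1} \frac{1}{k!} \theta(\ve_\alpha \otimes t)^k$; since these generate $G^{KM}_V$, the element $A$ is central. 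The only point that requires comment is that Proposition~\ref{abs_chev} was stated for an arbitrary restricted Lie algebra with no dimension hypothesis, so it applies verbatim to $\fg_\bK$; the rest of the argument is formal and sidesteps the algebraic-geometric subtleties of Theorem~\ref{extension_g}, which is why no analogue of Theorem~\ref{extension_g}.(1) appears in the Kac-Moody statement.
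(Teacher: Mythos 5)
Your proof is correct and matches the paper's approach, which simply forms the free product $H = \Asterisk_{\alpha\in\Phi^{re}} U_\alpha$ and then declares that the argument of Proposition~\ref{surj_hom} carries over word by word. Your version usefully makes explicit the two points the paper leaves implicit: that $\ve_\alpha\otimes t\in N_p(\fg_\bK)$ so equation~(\ref{eq1}) (i.e.\ Proposition~\ref{abs_chev}) applies, and that Proposition~\ref{abs_chev} has no finite-dimensionality hypothesis, so it transfers unchanged to the Kac-Moody algebra.
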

\begin{proof}
  Let 
  $H$ be the free product of all additive groups
  $U_\alpha$ for all real roots $\alpha$.
  Both $G^{KM}_V$ and $G^{ad}_\mA(\bK)$ are naturally quotients of $H$.
  From this point the rest of the proof repeats
  the proof of Proposition~\ref{surj_hom} word for word.
%
\end{proof}

As soon as there are few endomorphisms, the map $\phi$ in Theorem~\ref{extension_g}
can be ``reversed'' to define a projective representation of the Kac-Moody group.

\begin{cor}
If in the conditions of Theorem~\ref{extension_g}
  the representation 
  $(V,\theta)$ is a brick (see Cor.~\ref{extension_g1}), 
then the assignment
$$
\Theta: G^{ad}_\mA (\bK) \rightarrow \GL (V), \
\Theta (\Ad (X_\alpha (t) ) = Y_\alpha (t),
$$
extends to a group homomorphism
$G^{ad}_\mA (\bK) \rightarrow \PGL (V)$
and, thus, defines a projective representation of $G^{ad}_\mA (\bK)$.
\end{cor}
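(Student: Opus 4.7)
The plan is to invert the surjection $\phi: G^{KM}_V \rightarrow G^{ad}_\mA(\bK)$ produced by Theorem~\ref{extension_gKM}, at the cost of passing from $\GL(V)$ down to $\PGL(V)$. The main leverage is the brick hypothesis: since $\End_{\fg_\bK}(V) = \bK$, every $\fg_\bK$-automorphism of $V$ is a non-zero scalar, so $\Aut_{\fg_\bK}(V) = \bK^\times \cdot I_V$. Theorem~\ref{extension_gKM} states that $\ker\phi$ consists of $\fg_\bK$-automorphisms of $V$, whence $\ker\phi \subseteq \bK^\times \cdot I_V$.

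Next I would compose the inclusion $G^{KM}_V \hookrightarrow \GL(V)$ with the natural projection $\pi: \GL(V) \rightarrow \PGL(V)$. The kernel of $\pi$ is precisely $\bK^\times \cdot I_V$, which contains $\ker\phi$ by the previous step. Hence $\pi|_{G^{KM}_V}$ factors through the quotient $G^{KM}_V/\ker\phi$. By the first isomorphism theorem, $\phi$ induces an abstract group isomorphism $\bar\phi: G^{KM}_V/\ker\phi \xrightarrow{\sim} G^{ad}_\mA(\bK)$, and I would define $\Theta$ as the composition of $\bar\phi^{-1}$ with the factored map $G^{KM}_V/\ker\phi \rightarrow \PGL(V)$.

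Unwinding the definitions, $\Theta(\Ad(X_\alpha(t)))$ equals the class of $Y_\alpha(t) = e^{\theta(\ve_\alpha \otimes t)}$ in $\PGL(V)$, which is exactly the formula in the statement. There is no substantive obstacle: the entire content is the observation that the brick hypothesis collapses the central kernel of $\phi$ to scalars, which is exactly what is needed to drop the map down to $\PGL(V)$. In particular, no direct verification of the Tits relations is required, because compatibility on the generators $\Ad(X_\alpha(t))$ is automatic from the fact that $\bar\phi$ is already a group isomorphism.
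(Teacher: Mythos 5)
Your argument is correct, and since the paper gives no explicit proof of this corollary, yours is presumably the intended one. The key chain — brick forces $\Aut_{\fg_\bK}(V) = \bK^\times\cdot I_V$, Theorem~\ref{extension_gKM} places $\ker\phi$ inside that scalar group, which is exactly the kernel of $\GL(V)\to\PGL(V)$, so the inclusion factors through $G^{KM}_V/\ker\phi\cong G^{ad}_\mA(\bK)$ — is sound. One small remark: the reference ``Theorem~\ref{extension_g}'' in the statement is evidently a misprint for Theorem~\ref{extension_gKM}, which you correctly read through; and you are right that no separate verification of the Tits relations is needed, since well-definedness is inherited automatically from the fact that $\phi$ is already a group homomorphism defined on $G^{KM}_V$.
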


\section{Higher Frobenius Kernels}
\label{s1.2}
%
%
In this section we take $G$ to be a semisimple simply-connected split
algebraic group over a field $\bK$ of characteristic $p>0$.
Let $\Phi$ be the root system of $G$,
$\Pi=\{\alpha_1,\ldots,\alpha_r\}\subseteq\Phi$ a basis of simple roots.
The standard Chevalley basis of the Lie algebra
$\fg = \mbox{Lie} (G)$ is $\ve_\alpha, \ \alpha \in\Phi$,
$\vh_i = [\ve_{\alpha_i}, \ve_{-\alpha_i}]$.
In particular, $\fg$ is generated by $\ve_\alpha, \; \alpha\in \Phi$.
It is useful to keep in mind that $ad(\ve_\alpha)^p=0$ for all $\alpha \in\Phi$.

Let $G\0$ be the $n$-th Frobenius kernel of $G$,
$\Di (G\0)$ the distribution algebra on it.
$\Di(G\0)$ has a divided powers basis
$$
\prod_{\alpha\in \Phi^{+}}\ve_\alpha^{(m_\alpha)}
\prod_{\beta\in\Pi}\binom{\vh_\beta}{n_\beta}
\prod_{\alpha\in\Phi^{+}}\ve_{-\alpha}^{(m_{-\alpha})}
\ \ \ 0\leq m_\alpha,n_\beta,m_{-\alpha}<p^n \; . 
$$
If $k<p$ then
$$
\ve^{(k)}=\frac{1}{k!}\ve^k
\in \Di(G\1) \ni
\binom{\vh}{k}=\frac{1}{k!} \vh (\vh-1) \ldots (\vh-k+1)
$$
so that $\Di (G\1)$ is a subalgebra of $\Di (G\0)$, naturally isomorphic to
$U_0(\fg)$.

Let us now consider
a representation $(V,\theta)$ of $G\0$.
It is naturally 
a representation of $\Di(G\0)$ which we also denote by $(V,\theta)$.
We define exponentials in an analogous way to the previous section:
$$
Y_{\alpha}(t) = Y^V_{\alpha}(t)
\coloneqq e^{\theta(t\ve_{\alpha})}=\sum_{k=0}^{p^n-1}\theta(t^k\ve_{\alpha}^{(k)})
\in \End(V), \ \
Z_{\alpha}(t)=e^{t\ve_\alpha}=\sum_{k=0}^{p^n-1}t^k\ve_\alpha^{(k)}\in \Di(G\0)
$$
where $t\in\bK$ and $\alpha\in \Phi$.
Both $Y_\alpha(t)$ and $Z_\alpha(t)$ are invertible.
In fact, 
these are one-parameter subgroups:
$Y_\alpha(t)Y_\alpha(s)=Y_\alpha(t+s)$ and
$Z_\alpha(t)Z_\alpha(s)=Z_\alpha(t+s)$.
Let us generate subgroups by them:	
$$
G_{(n),V} \coloneqq \langle Y_\alpha (t) \,\mid\,
\alpha\in\Phi, t\in \bK \rangle \leq \GL(V), \ 
\widetilde{G}\coloneqq \langle Z_\alpha (t) \,\mid\, \alpha\in\Phi, t\in \bK \rangle \leq \GL_1(\Di(G\0)). 
$$
Conjugation by $G$ equips $\Di(G\0)$ with a $G$-module structure,
which we can then restrict to $G\0$-module and $\Di(G\0)$-module structures.
We denote the corresponding representation of $\Di(G\0)$ by $ad$ because
it is a version of the adjoint representation; for instance,
the ``usual'' adjoint representation on $\fg$ is a subrepresentation under
$\fg \hookrightarrow U_0 (\fg) \hookrightarrow \Di (G\0)$
(cf. \cite[I.7.18, I.7.11(4)]{Jan}). We also use $ad$ to denote the representation of $\Di(G)$ on $\Di(G\0)$; this restricts to the above $ad$ on $\Di(G\0)$.
We say that $(V,\theta)$ is {\em $n$-over-restricted} if
$\theta(\ve_\alpha^{(k)})=0$ for all $k\geq \lfloor (p^n+1)/2\rfloor$ and all $\alpha\in\Phi$.
Notice that if $p^n=2$ then this condition forces
$(V,\theta)$ to be a direct sum of copies of the trivial module.
\begin{prop}
  \label{abs_n_chev} 
  (cf. Proposition~\ref{abs_chev}) 
          If $(V,\theta)$ is an $n$-over-restricted
          representation of $\Di(G\0)$, then
          $$
          \theta \Big(ad(Z_\alpha(t))(\vd)\Big) = Y_\alpha(t)\theta(\vd)Y_\alpha(-t)
          $$
	for all $t\in \bK$, $\alpha\in\Phi$ and $\vd\in \Di(G\0)$.
	\end{prop}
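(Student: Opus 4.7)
The plan is to adapt the proof of Proposition~\ref{abs_chev} to the divided-power setting, replacing the inductive Lie-bracket identity on $\tfrac{1}{k!}ad(\vx)^{k}(\vy)$ by its divided-power analogue on $ad(\ve_\alpha^{(k)})(\vd)$. First I would establish the identity
\[
ad(\ve_\alpha^{(k)})(\vd) \;=\; \sum_{i+j=k}(-1)^{j}\,\ve_\alpha^{(i)}\,\vd\,\ve_\alpha^{(j)} \qquad (0 \leq k \leq p^{n}-1)
\]
inside $\Di(G\0)$. Because $\Di(G\0)$ is a sub-Hopf-algebra of $\Di(G)$ and $ad$ is the representation derived from $G$-conjugation, this is just the standard Hopf-algebraic adjoint formula $ad(x)(\vd)=\sum x_{(1)}\,\vd\,S(x_{(2)})$ applied to $x=\ve_\alpha^{(k)}$, using the divided-power coproduct $\Delta(\ve_\alpha^{(k)})=\sum_{i+j=k}\ve_\alpha^{(i)}\otimes\ve_\alpha^{(j)}$ and the antipode $S(\ve_\alpha^{(j)})=(-1)^{j}\ve_\alpha^{(j)}$. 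If one prefers to avoid the Hopf structure, the same identity can be proved by induction on $k$, exactly mirroring the induction in the proof of Proposition~\ref{abs_chev}. Since $k<p^{n}$ automatically forces $i,j<p^{n}$, every term on the right lies in $\Di(G\0)$.

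Next I would weight this identity by $t^{k}$, sum over $0\leq k\leq p^{n}-1$, and apply the algebra homomorphism $\theta\colon\Di(G\0)\rightarrow\End(V)$ to obtain
\[
\theta\bigl(ad(Z_\alpha(t))(\vd)\bigr) \;=\; \sum_{\substack{i,j\geq 0 \\ i+j\leq p^{n}-1}} t^{i+j}(-1)^{j}\,\theta(\ve_\alpha^{(i)})\,\theta(\vd)\,\theta(\ve_\alpha^{(j)}).
\]
In parallel, direct multiplication gives
\[
Y_\alpha(t)\,\theta(\vd)\,Y_\alpha(-t) \;=\; \sum_{i,j=0}^{p^{n}-1} t^{i+j}(-1)^{j}\,\theta(\ve_\alpha^{(i)})\,\theta(\vd)\,\theta(\ve_\alpha^{(j)}).
\]
So the two expressions differ exactly by the cross-terms with $0\leq i,j\leq p^{n}-1$ and $i+j\geq p^{n}$, which I would then dispose of using the $n$-over-restricted hypothesis.

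The vanishing argument is elementary. If both $i$ and $j$ were strictly less than $\lfloor(p^{n}+1)/2\rfloor$, their sum would be at most $p^{n}-1$ (checked separately in the cases $p$ odd and $p=2$), contradicting $i+j\geq p^{n}$; hence in every rogue cross-term at least one of $\theta(\ve_\alpha^{(i)})$, $\theta(\ve_\alpha^{(j)})$ vanishes, and the two double sums coincide. I expect the main obstacle to be this final combinatorial bookkeeping step, which plays the role of the third equality in the displayed computation of Proposition~\ref{abs_chev}; one must verify that the threshold $\lfloor(p^{n}+1)/2\rfloor$ in the definition of $n$-over-restricted is sharp enough to eliminate every cross-term with $i+j\geq p^{n}$, which is exactly why it is the correct analogue of the $\lfloor(p+1)/2\rfloor$ cutoff from Section~\ref{s1.1}.
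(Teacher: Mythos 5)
Your proof is correct and essentially identical to the paper's: both compute $ad(\ve_\alpha^{(k)})(\vd)$ via the Hopf-algebraic formula $ad(\vx)(\vd)=\sum \vx_{(1)}\vd\, S(\vx_{(2)})$ with the divided-power coproduct and antipode, and then identify the discrepancy between the two sides as the cross-terms with $i+j\geq p^n$, which vanish by the $n$-over-restricted hypothesis. You make the final combinatorial bookkeeping (that $i+j\geq p^n$ forces $\max(i,j)\geq\lfloor(p^n+1)/2\rfloor$) more explicit than the paper, which simply asserts the conclusion; otherwise the two arguments coincide.
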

	\begin{proof}
          We write $ad$ using Sweedler's $\Sigma$-notation \cite[I.7.18]{Jan}:  
          $$
          ad(\vx)(\vd)=\sum_{(\vx)}\vx\1 \vd S(\vx\2) \ \mbox{ for all } \vx,\vd \in \Di (G\0).
          $$
Since $\Delta(\ve_\alpha^{(k)})=\sum_{i+j=k}\ve_\alpha^{(i)}\otimes\ve_\alpha^{(j)}$ and
          $S(\ve_\alpha^{(k)})=(-1)^k\ve_\alpha^{(k)}$, we get
$$\theta(ad(t^k\ve_\alpha^{(k)})(\vd))=
\theta(\sum_{i+j=k}(-1)^jt^k\ve_\alpha^{(i)}\vd\ve_\alpha^{(j)})=
\sum_{i+j=k}\theta(t^i\ve_\alpha^{(i)})\theta(\vd)\theta((-t)^j\ve_\alpha^{(j)}).
$$
Hence,
$$
\theta \Big(ad(Z_\alpha(t))(\vd)\Big) 
=
\sum_{k=0}^{p^n-1}\sum_{i+j=k}\theta(t^i\ve_\alpha^{(i)})\theta(\vd)\theta((-t)^j\ve_\alpha^{(j)}). 
$$
On the other hand, we have 
$$
Y_\alpha(t)\theta(\vd)Y_\alpha(-t)
=
\sum_{i,j=0}^{p^n-1}\theta(t^i\ve_\alpha^{(i)})\theta(\vd)\theta((-t)^j\ve_\alpha^{(j)})
.
$$
The result follows from the fact that $V$ is $n$-over-restricted.
	\end{proof}

        It is useful to remind the reader that $\fg$ can be recovered inside
        $\Di (G\0)$ as the set of primitive elements:
        $$
        \fg = \Pr (\Di (G\0)) \coloneqq
        \{ \vd \in \Di (G\0)\;\mid\; \Delta(\vd) = \vd\otimes 1 + 1 \otimes \vd \}.$$
        This explains why $\fg$ is a
        submodule of $\Di (G\0)$ under the adjoint action: we leave it to the reader
        to check that $ad(\vx)(\vd)\in \Pr (\Di (G\0))$ for all
        $\vx\in\Di (G\0)$ and $\vd\in \Pr (\Di (G\0))$. 
        
	\begin{prop}\label{surh_map}
          Let $(V,\theta)$ be an $n$-over-restricted representation of $\Di(G\0)$,
          faithful on $\fg$. 
Then the assignment
		$$
\phi (Y^V_\alpha(t)) =Y^{\fg}_\alpha(t) \ (\; = e^{ad(t\ve_\alpha)})$$
extends to		
a surjective homomorphism of groups
$\phi: G_{(n),V}\rightarrow G_{(n),\fg}$, whose kernel consists of $\fg$-automorphisms
		of $V$.                
	\end{prop}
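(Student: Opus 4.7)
The plan is to run the argument of Proposition~\ref{surj_hom} with Proposition~\ref{abs_n_chev} in place of Proposition~\ref{abs_chev}, restricting the input of faithfulness to the subspace $\fg \subseteq \Di(G\0)$. Let $H \coloneqq \Asterisk_{\alpha\in\Phi} U_\alpha$ be the free product of one copy of the additive group $U_\alpha = (\bK,+)$ per root. The one-parameter families $t\mapsto Y^V_\alpha(t)$ and $t\mapsto Y^\fg_\alpha(t)$ assemble into surjective group homomorphisms $f^\sharp\colon H\twoheadrightarrow G_{(n),V}$ and $\widehat{f}^\sharp\colon H\twoheadrightarrow G_{(n),\fg}$, so that to construct $\phi$ it suffices to verify $\ker f^\sharp \subseteq \ker \widehat{f}^\sharp$.

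Given $w = w_1 \ast \cdots \ast w_k \in \ker f^\sharp$ with $w_i$ in the $\alpha_{s(i)}$-factor of value $t_i\in\bK$, set $z \coloneqq Z_{\alpha_{s(1)}}(t_1)\cdots Z_{\alpha_{s(k)}}(t_k) \in \Di(G\0)$. Iterating Proposition~\ref{abs_n_chev} yields
\[
\theta(\vd) \;=\; f^\sharp(w)\,\theta(\vd)\,f^\sharp(w)^{-1} \;=\; \theta\big(ad(z)(\vd)\big) \qquad \text{for every } \vd\in\Di(G\0).
\]
Since $\fg$ is $ad$-stable and $\theta$ is faithful on $\fg$, restricting $\vd$ to $\fg$ forces $ad(z)(\vd)=\vd$ there; but this restriction equals $\widehat{f}^\sharp(w)$, so $w\in\ker\widehat{f}^\sharp$. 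Thus $\phi$ is a well-defined surjective group homomorphism. If $A = f^\sharp(w)\in\ker\phi$, the same calculation yields $A\theta(\vy)A^{-1} = \theta(\vy)$ for every $\vy\in\fg$, so $A\in\Aut_\fg(V)$.

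The main subtlety, absent in Proposition~\ref{surj_hom}, lies in the interplay of three adjoint actions: the $\Di(G\0)$-action on itself, its restriction to $\fg$ via primitivity, and the self-action of $\fg$ by bracket. One must check that the target generator $Y^\fg_\alpha(t)$ really equals the restriction $ad(Z_\alpha(t))|_\fg$; this rests on the identity $ad(\ve_\alpha^{(k)})|_\fg = \tfrac{1}{k!}ad(\ve_\alpha)^k$ for $k<p$ together with the vanishing of these restrictions for $k\geq p$, both of which follow from $\fg = \Pr(\Di(G\0))$ as noted immediately before the proposition. In contrast with the first Frobenius kernel, one cannot expect $\ker\phi$ to be central here, since the generators $Y^V_\alpha(t)$ involve higher divided-power terms $\theta(\ve_\alpha^{(k)})$ with $k\geq p$ that are not controlled by merely commuting with $\theta(\fg)$.
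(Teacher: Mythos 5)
Your proof is correct and follows the paper's own argument essentially line for line: the free product $H$ on the root subgroups, the deduction $\ker f^\sharp\subseteq\ker\widehat{f}^\sharp$ via iterating Proposition~\ref{abs_n_chev}, and the observation that kernel elements of $\phi$ commute with $\theta(\fg)$. One small remark on the final paragraph: the equality $Y^\fg_\alpha(t) = ad(Z_\alpha(t))|_\fg$ does not actually require the identity $ad(\ve_\alpha^{(k)})|_\fg = \tfrac{1}{k!}ad(\ve_\alpha)^k$ or any vanishing statement --- it is immediate from linearity, since both sides are by definition $\sum_{k=0}^{p^n-1} ad(t^k\ve_\alpha^{(k)})$ restricted to the $ad$-submodule $\fg$. (That finer identity would only be needed to match this up with a classical Lie-algebra exponential $\sum_{k<p}\tfrac{1}{k!}ad(t\ve_\alpha)^k$, which is not used in the argument.) Your observation about centrality is apt and consistent with the paper dropping that claim here, in contrast to Proposition~\ref{surj_hom}.
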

\begin{proof}
	The fact that $\phi$ is a well-defined homomorphism
  is proved in a similar way as
  in Proposition~\ref{surj_hom}.
Let $H=\ast_{\alpha}U_\alpha$ be the free product of (additive) root subgroups.
  Both $G_{(n),V}$ and $G_{(n),\fg}$ are naturally quotients of $H$.
If $W_{\beta_1}(t_1)\ast\ldots \ast W_{\beta_m}(t_m) \in \ker (H\rightarrow G_{(n),V})$ then
$$
Y^V_{\beta_1}(t_1)\ldots Y^V_{\beta_m}(t_m) 
= I_V
\; . 
$$ 
Proposition~\ref{abs_n_chev} tells us 
that for all $\vd \in \fg$
$$
\theta(ad(Z_{\beta_1}(t_1))ad(Z_{\beta_2}(t_2))\ldots ad(Z_{\beta_m}(t_m))(\vd))=
\theta(Y^{\fg}_{\beta_1}(t_1)\ldots Y^{\fg}_{\beta_m}(t_m)(\vd))= 
\theta (\vd). $$
Since $\theta$ is faithful on $\fg$, 
$Y^{\fg}_{\beta_1}(t_1)Y^{\fg}_{\beta_2}(t_2)\ldots Y^{\fg}_{\beta_m}(t_m)
=
I_\fg$,   
hence
$W_{\beta_1}(t_1)\ast\ldots \ast W_{\beta_m}(t_m) 
\in \ker (H\rightarrow G_{(n),\fg})$. 
Thus, the homomorphism $\phi$ is well-defined.

Suppose
$A=Y^V_{\beta_1}(t_1)\ldots Y^V_{\beta_m}(t_m) 
\in \ker (\phi)$.
By above, $\theta(\vd)=\theta(\phi(A)(\vd))=A\theta(\vd)A^{-1}$ for all
$\vd\in \fg$. Hence, $A\in \Aut_{\fg}(V)$.
			\end{proof}

If the adjoint representation is $n$-over-restricted, 
we can identify the adjoint group $G_{ad}$ with $G_{(n),\fg}$.
Proposition~\ref{surh_map}
yields an exact sequence of abstract groups
$$
1\rightarrow Z_{(n),V} \rightarrow
G_{(n),V}\xrightarrow{\phi} G_{ad}
\rightarrow 1
$$
where $Z_{(n),V}$ is the kernel of $\phi$.
To tie up loose ends
we need to address the algebraic group properties
of this sequence:
\begin{HFC} \label{extension_gn}
  Suppose that $G$ is a semisimple connected algebraic group
  over an algebraically  closed field $\bK$.
  The  following statements should hold
for an $n$-over-restricted finite-dimensional representation $(V,\theta)$ of $G\0$,
  faithful on $\fg$:
  \begin{enumerate}
  \item The map $\phi: G_{(n),V}\rightarrow G_{(n),\fg}$
    constructed in Proposition~\ref{surh_map}
    is a homomorphism of algebraic groups.
  \item 
If $(\fg,ad)$ is $n$-over-restricted then
$\phi: G_{(n),V}\rightarrow G_{(n),\fg}$
is a central extension of algebraic groups.
  \item If $(\fg,ad)$ is $n$-over-restricted then
    $(V,\theta)$
    extends to a rational representation of the simply-connected group $G_{sc}$.
     \end{enumerate}
\end{HFC}


\section{Applications of Higher Frobenius Conjecture}
\label{s1.3}
We consider $G$ as in the previous section, and assume $\bK$ to be algebraically closed. 
Let $(P,\theta)$ be a projective indecomposable $U_0 (\fg)$-module.
The well-known Humphreys-Verma Conjecture \cite{Ball, Don2, HuVe, Sob}
(currently proved for $p\geq 2h-2$, where $h$ is the Coxeter number \cite{Jan0}, cf. \cite[II.11.11]{Jan})
states $(P,\theta)$ extends to a $G$-module.
A similar statement for the higher Frobenius kernels follows from Humphreys-Verma
Conjecture
\cite[Remark II.11.18]{Jan}. 
Let us examine what 
the Higher Frobenius Conjecture can contribute towards this long-standing conjecture.

Let $T$ be a maximal torus of $G$.
$TG\0$-modules are the same as $X(T)$-graded $G\0$-modules.
We can control
the condition of being $n$-over-restricted for them
by monitoring their weights $X(V)=\{ \lambda\in X(T) \,\mid\, V_\lambda \neq 0 \}$.
We define {\em the height of } $V$ by the following formula:
$$
\xi (V) \coloneqq \inf \{ n \in \bN \,\mid\,
\forall \alpha \in \Phi \ \ \ X(V)\cap (X(V)+n\alpha) = \emptyset \}.
$$
Clearly $\theta(\ve_\alpha^{(\xi (V))})=0$ is guaranteed for a $TG\0$-module $(V,\theta)$.
Hence, the next proposition immediately follows from the Higher Frobenius Conjecture:
\begin{prop}
  \label{extension_grade}
 Suppose that 
Higher Frobenius Conjecture holds for 
a connected simply-connected
semisimple algebraic group $G$ such that $Z(\fg)=0$.
  Assume further that if $p^n=3$, then
  $G$ has no components of type $G_2$.
  Let $(V,\theta)$ be a $TG\0$-module, faithful as a $\fg$-module,
  such that $p^n \geq 2 \xi (V)-1$ if $p$ is odd, or $p^n\geq 2\xi(V)$ if $p=2$.
  Then $(V,\theta)$ can be extended to a $G$-module.
\end{prop}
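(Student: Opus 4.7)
First I would verify the hypotheses of part~(3) of the Higher Frobenius Conjecture for $(V,\theta)$: namely that $(V,\theta)$ is $n$-over-restricted, that $(\fg,ad)$ is $n$-over-restricted, and that $V$ is faithful on $\fg$ (which is given). To show $(V,\theta)$ is $n$-over-restricted, I would decompose $V=\bigoplus_{\lambda\in X(V)}V_\lambda$ into $T$-weight spaces; the divided power $\ve_\alpha^{(k)}$ carries weight $k\alpha$, so $\theta(\ve_\alpha^{(k)})$ sends $V_\lambda$ into $V_{\lambda+k\alpha}$ and therefore vanishes unless $X(V)\cap(X(V)+k\alpha)\neq\emptyset$. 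By definition of $\xi(V)$, this forces $k<\xi(V)$, i.e.\ $\theta(\ve_\alpha^{(k)})=0$ whenever $k\geq\xi(V)$. A case analysis on the parity of $p$ shows that the arithmetic hypothesis is exactly equivalent to $\lfloor(p^n+1)/2\rfloor\geq\xi(V)$, so $\theta(\ve_\alpha^{(k)})=0$ for all $\alpha\in\Phi$ and all $k\geq\lfloor(p^n+1)/2\rfloor$, as required.

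Next I would verify that $(\fg,ad)$ is $n$-over-restricted. The same weight-shift argument, applied to $\fg$ viewed as a $TG\0$-module with weight set $X(\fg)=\Phi\cup\{0\}$, reduces the task to the root-theoretic inequality $\xi(\fg)\leq\lfloor(p^n+1)/2\rfloor$. An inspection of root strings gives $\xi(\fg)=3$ for every irreducible type except $G_2$, where $\xi(\fg)=4$ comes from the length-four $\alpha$-string $\beta,\beta+\alpha,\beta+2\alpha,\beta+3\alpha$ through the long simple root $\beta$. The clause excluding $G_2$ components precisely when $p^n=3$ is calibrated to this borderline case, and in every other case allowed by the hypotheses the lower bound on $p^n$ inherited through $\xi(V)$ delivers $\xi(\fg)\leq\lfloor(p^n+1)/2\rfloor$ as well.

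With both $(V,\theta)$ and $(\fg,ad)$ shown to be $n$-over-restricted and $V$ faithful on $\fg$, part~(3) of the Higher Frobenius Conjecture produces an extension of $(V,\theta)$ to a rational representation of $G_{sc}$; since $G$ is simply-connected by hypothesis, $G=G_{sc}$ and we obtain the sought $G$-module structure. The assumption $Z(\fg)=0$ ensures this extension lands on $G$ itself rather than being forced through a proper central quotient.

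The main obstacle is the second step: $\xi(V)$ and $\xi(\fg)$ are not in general comparable (a faithful $V$ may have $\xi(V)<\xi(\fg)$, e.g.\ the standard $2$-dimensional module for $\sl_2$), so transferring the arithmetic hypothesis on $\xi(V)$ into the required inequality on $\xi(\fg)$ rests entirely on the root-system case analysis and on the $G_2$ side condition handling the single boundary case.
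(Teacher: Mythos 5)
Your top-level strategy matches the paper's, which is essentially one sentence: note that $\theta(\ve_\alpha^{(\xi(V))})=0$ by the weight-shift argument, observe that the arithmetic hypothesis is exactly $\xi(V)\leq\lfloor(p^n+1)/2\rfloor$, and then invoke the Higher Frobenius Conjecture. Your first step correctly spells out this computation and is fine.

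You are right to point out that HFC~(3) has an additional hypothesis the paper does not address — namely that $(\fg,ad)$ is $n$-over-restricted — and that this is not automatic. However, your second step does not actually establish it. Since $V$ is faithful, the hypotheses only force $\xi(V)\geq 2$, so $p^n=3$ (for odd $p$) and $p^n=4$ (for $p=2$) are permitted whenever $\xi(V)=2$ (e.g.\ the standard module for $\sl_n$). In those cases $\lfloor(p^n+1)/2\rfloor=2$, whereas $\xi(\fg)\geq 3$ for \emph{every} semisimple Lie algebra (already $-\alpha$ and $\alpha$ lie in $\Phi\cup\{0\}$ and differ by $2\alpha$), so $(\fg,ad)$ is not $n$-over-restricted. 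The $G_2$ exclusion at $p^n=3$ is irrelevant here: it cannot rescue the non-$G_2$ cases $p^n\in\{3,4\}$, and it also says nothing about $p^n=5$ with a $G_2$ component, where you would need $\lfloor 6/2\rfloor=3\geq\xi(\fg)=4$, which fails. Your reading of the $G_2$ clause is probably also off: by analogy with Corollary~\ref{extension_g11}, that clause is there to guarantee $G_{ad}=G_{(n),\fg}$ (a problem for $G_2$ when $p=3$, $n=1$ because the Chevalley exponential involves $ad_\bZ(\ve_\alpha^{(3)})$, which is unavailable in $\Di(G_{(1)})$), not because $\xi(\fg)=4$.

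In short, the sentence ``in every other case allowed by the hypotheses the lower bound on $p^n$ inherited through $\xi(V)$ delivers $\xi(\fg)\leq\lfloor(p^n+1)/2\rfloor$'' is false, and the cases you yourself flag as delicate are not in fact handled by the root-system case analysis. To be fair, the paper's own ``immediately follows from Higher Frobenius Conjecture'' also passes over this hypothesis of HFC~(3) in silence; the gap you ran into in trying to make the argument explicit appears to be real and would need an extra assumption such as $\xi(V)\geq\xi(\fg)$, or simply $p^n\geq 5$ (respectively $p^n\geq 7$ for $G_2$), to close.
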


It follows that if a $TG\1$-module
can be extended to a $TG\0$-module for sufficiently large $n$,
then it can be extended to a $G$-module. Due to particular significance of projective
$U_0 (\fg)$-modules we state this observation for them as a proposition. 
Recall that $\rho=\frac{1}{2}\sum_{\alpha\in\Phi^{+}}\alpha$ is the half-sum
of positive roots.
Let $a=\max_{1\leq i\leq r}(a_i)$
where 
  $2\rho=\sum_{\alpha_i\in\Pi}a_i\alpha_i$ for $a_i\in \bZ$.

\begin{prop}
  \label{extension_p}
 Suppose that the Higher Frobenius Conjecture holds for 
a connected simply-connected
semisimple algebraic group $G$ such that $Z(\fg)=0$.
  Let $P$ be a projective indecomposable $U_0 (\fg)$-module.
  Suppose $P$ extends to a rational $G\0$-module where
  $$
  n \geq \log_p(4a(p-1)+1).
  $$
  if $p$ is odd, or 
  $$n\geq \log_2(a+1)+2$$
  if $p=2$.
  Then $P$ extends to a $G$-module.
\end{prop}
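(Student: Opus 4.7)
The plan is to invoke Proposition~\ref{extension_grade} with $V=P$, for which I must verify three things: (i) $P$ carries a compatible $TG\0$-structure; (ii) $P$ is faithful as a $\fg$-module; and (iii) the numerical inequality $p^n\ge 2\xi(P)-1$ (respectively $p^n\ge 2\xi(P)$ when $p=2$) holds.

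For (i), every projective indecomposable $U_0(\fg)$-module is of the form $Q_1(\lambda)$ for some restricted $\lambda\in X(T)$ and admits a canonical $G\1 T$-structure (see \cite[II.11]{Jan}). The given rational $G\0$-extension is then compatible (up to twisting by a character of $T$) with this $T$-action, upgrading $P$ to a $TG\0$-module with the same weight-space decomposition. Claim (ii) is immediate: since $Z(\fg)=0$, the Lie algebra $\fg$ is a direct sum of simple Lie algebras, and because $P$ has nontrivial weight spaces linked by root vectors, no element of $\fg$ can annihilate it.

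The crux is (iii). Using the filtration of $Q_1(\lambda)$ by baby Verma modules $Z(\mu)$---each of whose weights have the form $\mu-\sum_{\alpha\in\Phi^{+}}c_\alpha\alpha$ with $0\le c_\alpha\le p-1$, and hence lie below $\mu$ by at most $(p-1)a_i$ in the $\alpha_i$-direction---together with the standard bound $\mu\le\lambda+2(p-1)\rho$ on the highest weights entering the filtration, I conclude that the weight set $X(P)$ fits in a box of width at most $2(p-1)a_i$ along each simple-root direction. Expanding a root as $\alpha=\sum_i b_i\alpha_i$ and writing $\mu_1-\mu_2=n\alpha$ forces $|n|b_i\le 2(p-1)a_i$ for each $i$, hence $|n|\le 2(p-1)\min_{b_i\ne 0}(a_i/b_i)\le 2a(p-1)$, so $\xi(P)\le 2a(p-1)+1$. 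The hypothesis then supplies $p^n\ge 4a(p-1)+1\ge 2\xi(P)-1$ (and analogously $2^n\ge 2\xi(P)$ when $p=2$), so Proposition~\ref{extension_grade} delivers the required $G$-module structure on $P$.

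The main obstacle I expect is the weight bound of step (iii); although the estimate $\xi(P)\le 2a(p-1)+1$ is far from tight, obtaining it cleanly requires careful use of the theory of $G\1 T$-projectives, in particular a precise grip on the highest weight and on the baby-Verma composition factors of $Q_1(\lambda)$. Everything else reduces to bookkeeping with the coefficients $a_i$ of $2\rho$ in the simple-root basis.
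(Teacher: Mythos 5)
Your proof is correct in its conclusion, but the route you take for the central weight estimate is noticeably heavier than the one in the paper, and it is worth pointing out the difference. You establish $\xi(P)\le 2a(p-1)+1$ by unpacking the structure theory of the projective indecomposable $G_{(1)}T$-module $Q_1(\lambda)$: its baby-Verma filtration, the range of highest weights $\mu$ entering the filtration, and the weight box of each $Z(\mu)$. The paper sidesteps all of this with the single observation that a projective indecomposable $U_0(\fg)$-module is a direct summand of the regular module, so $X(P)\subseteq X(U_0(\fg))$ and hence $\xi(P)\le \xi(U_0(\fg))$; the latter is then read off directly from the PBW basis, whose extremal grades are $\pm 2(p-1)\rho$. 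This gives the same bound $\xi(U_0(\fg))\le 2(p-1)a+1$ in two lines, with no need for reciprocity, the highest weight $2(p-1)\rho+w_0\lambda$ of $Q_1(\lambda)$, or the lower bound $\mu\ge\lambda$ on the filtration parameters (which, incidentally, you invoke implicitly but never state: without it the baby-Verma route does not confine $X(P)$ to a box). So your approach works and buys no extra sharpness -- both give the same constant -- while the paper's is strictly more elementary. The remaining points (i) and (ii) you raise -- the compatibility of the $G_{(n)}$-structure with the $T$-grading, and faithfulness of $P$ over $\fg$ -- are treated by the paper as known (it simply cites \cite[II.11.3]{Jan} for the $TG_{(1)}$-structure and says nothing about faithfulness), so spelling them out as you do is reasonable, though your faithfulness argument would be cleaner stated ideal-theoretically: the kernel of $\fg\to\gl(P)$ is an ideal, and no simple factor of $\fg$ can act trivially on a projective indecomposable of its own restricted enveloping algebra.
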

\begin{proof}
  It is known that $P$ is a $TG\1$-module \cite[II.11.3]{Jan}.
  Clearly, $\xi (P) \leq \xi (U_0 (\fg))$.
  From the PBW-basis, it follows that the ``top'' grade of the grading on $U_0(\fg)$
  is attained by the element $\prod_{\alpha\in\Phi^{+}}\ve_\alpha^{p-1}$.
  This has grade $2(p-1)\rho$.
  Similarly, the ``bottom'' grade
  is $-2(p-1)\rho$.
  Thus, $\xi (U_0 (\fg))\leq 2(p-1)a+1$
and the condition in Proposition~\ref{extension_grade}, when $p$ is odd, becomes
$
p^n \geq 2\xi (U_0 (\fg))-1$; for this to be true, it is enough that $p^n\geq4a(p-1)+1.
$
When $p=2$, the condition becomes $2^{n-1}\geq \xi(U_0 (\fg))$, for which it is enough that $2^{n-1}\geq 2a+1$ or equivalently $2^{n-2}\geq a+1$.
\end{proof}

For the reader's benefit we add two tables.
The first contains the values of $2h-2$ and $a$.
The second lists the smallest prime $p_0$ for all groups up to rank 8
so that 
extension of $P$ to a rational $G\0$-module guarantees an extension
to a rational $G$-module
as soon as $p\geq p_0$
(the column is the type of $G$,
the row is $G\0$).
It also lists the smallest $n$ such that extension to $G\0$ ensures extension
to $G$ for $p=2,3,5$.
Some of the entries are marked with the \linebreak
dagger $\,^{\mbox{\tiny \textdagger}}$.
This signifies the presence of a nontrivial centre $Z(\fg)\neq 0$.

\begin{table}
	\label{ta1}
	\begin{center}
		\caption{Coxeter numbers and coefficients $a$}
		\vskip 3mm
		\bgroup
		\renewcommand{\arraystretch}{2}
		\begin{tabular}{|c||c|c|c|c|c|c|c|c|c|c|}
			\hline 
			& $A_{2l+1}$ & $A_{2l}$ & $B_n$ & $C_n$ & $D_{n}$ & $E_6$ & $E_7$ & $E_8$ & $F_4$ & $G_2$\\
			\hline
			$2h-2$ & $4l+2$ & $4l$ & $4n-2$ & $4n-2$ & $4n-6$ & $22$ & $34$ & $58$ & $22$ & $10$ \\
			$a$ & $(l+1)^2$ & $l(l+1)$ & $n^2$ & $(n-1)(n+2)$ & $(n+1)(n-2)$ & $42$ & $96$ & $270$ & $42$ & $10$ \\
			\hline 
		\end{tabular}
		\egroup
	\end{center}
\end{table}


  \begin{table}
	\label{ta2}
	\begin{center}
		\caption{$G_{(n)}$-extension requirements in characteristic $p$}
		\vskip 3mm
		\bgroup
		\def\arraystretch{2}
		\begin{tabular}{|c||c|c|c|c||c|}
			\hline 
			& $G_{(2)}$ & $G_{(3)}$ & $G_{(4)}$ & $G_{(5)}$ & $2$ \\
			\hline \hline
			$A_1$ & 3 & $\,^{\mbox{\tiny \textdagger}} 2$ & $\,^{\mbox{\tiny \textdagger}} 2$ & $\,^{\mbox{\tiny \textdagger}} 2$ & $\,^{\mbox{\tiny \textdagger}} G_{(3)}$ \\
			\hline
			$A_2$ & 7 & $\,^{\mbox{\tiny \textdagger}} 3$ & 2 & 2 & $G_{(4)}$ \\
			\hline
			$B_2$ & 17 & 5 & 3 & $\,^{\mbox{\tiny \textdagger}} 2$ & $\,^{\mbox{\tiny \textdagger}} G_{(5)}$ \\
			\hline
			$G_2$ & 41 & 7 & 3 & 3 & $G_{(6)}$ \\
			\hline
			$A_3$ & 17 & 5 & 3 & $\,^{\mbox{\tiny \textdagger}} 2$ & $\,^{\mbox{\tiny \textdagger}} G_{(5)}$ \\
			\hline
			$B_3$ & 37 & 7 & 3 & 3 & $\,^{\mbox{\tiny \textdagger}} G_{(6)}$ \\
			\hline
			$C_3$ & 41 & 7 & 3 & 3 & $\,^{\mbox{\tiny \textdagger}} G_{(6)}$ \\
			\hline
			$A_4$ & 23 & $\,^{\mbox{\tiny \textdagger}} 5$ & 3 & 2 & $G_{(5)}$ \\
			\hline
			$B_4$ & 67 & 11 & 5 & 3 & $\,^{\mbox{\tiny \textdagger}} G_{(7)}$ \\
			\hline
			$C_4$ & 71 & 11 & 5 & 3 & $\,^{\mbox{\tiny \textdagger}} G_{(7)}$ \\
			\hline
			$D_4$ & 41 & 7 & 3 & 3 & $G_{(6)}$ \\
			\hline
			$A_5$ & 37 & 7 & $\,^{\mbox{\tiny \textdagger}} 3$ & $\,^{\mbox{\tiny \textdagger}} 3$ & $G_{(6)}$ \\
			\hline
			$B_5$ & 101 & 11 & 5 & 3 & $\,^{\mbox{\tiny \textdagger}} G_{(7)}$ \\
			\hline
			$C_5$ & 113 & 11 & 5 & 3 & $\,^{\mbox{\tiny \textdagger}} G_{(7)}$ \\
			\hline
			$D_5$ & 71 & 11 & 5 & 3 & $G_{(7)}$ \\
			\hline
		\end{tabular}
		\quad
		\begin{tabular}{|c||c|c|c|c||c|c|c|}
			\hline 
			& $G_{(2)}$ & $G_{(3)}$ & $G_{(4)}$ & $G_{(5)}$ & $2$ & $3$ & $5$ \\
			\hline \hline
			$F_4$ & 167 & 13 & 7 & 5 & $G_{(8)}$ & $G_{(6)}$ & $G_{(5)}$ \\
			\hline
			$A_6$ & 47 & $\,^{\mbox{\tiny \textdagger}} 7$ & 5 & 3 & $G_{(6)}$ & $G_{(5)}$ & $G_{(4)}$\\
			\hline
			$B_6$ & 149 & 13 & 5 & 5 & $\,^{\mbox{\tiny \textdagger}} G_{(8)}$ & $G_{(6)}$ & $G_{(4)}$ \\
			\hline
			$C_6$ & 161 & 13 & 7 & 5 & $\,^{\mbox{\tiny \textdagger}} G_{(8)}$ & $G_{(6)}$ & $G_{(5)}$ \\
			\hline
			$D_6$ & 113 & 11 & 5 & 3 & $G_{(7)}$ & $G_{(5)}$ & $G_{(4)}$ \\
			\hline
			$E_6$ & 167 & 13 & 7 & 5 & $G_{(8)}$ & $\,^{\mbox{\tiny \textdagger}} G_{(6)}$ & $G_{(5)}$ \\
			\hline
			$A_7$ & 67 & 11 & 5 & 3 & $\,^{\mbox{\tiny \textdagger}} G_{(7)}$ & $G_{(5)}$ & $G_{(4)}$ \\
			\hline
			$B_7$ & 193 & 17 & 7 & 5 & $\,^{\mbox{\tiny \textdagger}} G_{(8)}$ & $G_{(6)}$ & $G_{(5)}$ \\
			\hline
			$C_7$ & 221 & 17 & 7 & 5 & $\,^{\mbox{\tiny \textdagger}} G_{(8)}$ & $G_{(6)}$ & $G_{(5)}$ \\
			\hline
			$D_7$ & 161 & 13 & 7 & 5 & $G_{(8)}$ & $G_{(6)}$ & $G_{(5)}$ \\
			\hline
			$E_7$ & 383 & 23 & 7 & 5 & $\,^{\mbox{\tiny \textdagger}} G_{(9)}$ & $G_{(7)}$ & $G_{(5)}$ \\
			\hline
			$A_8$ & 79 & 11 & 5 & 3 & $G_{(7)}$ & $\,^{\mbox{\tiny \textdagger}} G_{(5)}$ & $G_{(4)}$ \\
			\hline
			$B_8$ & 257 & 17 & 7 & 5 & $\,^{\mbox{\tiny \textdagger}} G_{(9)}$ & $G_{(6)}$ & $G_{(5)}$ \\
			\hline
			$C_8$ & 281 & 17 & 7 & 5 & $\,^{\mbox{\tiny \textdagger}} G_{(9)}$ & $G_{(6)}$ & $G_{(5)}$ \\
			\hline
			$D_8$ & 221 & 17 & 7 & 5 & $G_{(8)}$ & $G_{(6)}$ & $G_{(5)}$ \\
			\hline
			$E_8$ & 1087 & 37 & 11 & 7 & $G_{(11)}$ & $G_{(7)}$ & $G_{(6)}$ \\
			\hline	
		\end{tabular}
		\egroup
	\end{center}
\end{table}

\section{Examples}
\label{s1.4}
The heights can be computed for Weyl modules.
Let $V(\lambda)$ be the Weyl module with the highest weight
$\lambda = \sum_i k_i \varpi_i$ written in the basis of fundamental weights.
It follows from the description of $V(\lambda)$ by generators and relations
\cite[Theorem 21.4]{Hu2}
that
$$
\xi (V(\lambda)) \leq 1 + 2 \max_{i} \frac{(\lambda,\alpha_i)}{(\alpha_i,\alpha_i)}
=
1 + \max_{i} k_i \; . 
$$
This means that the Weyl modules with $k_i\leq(p-1)/2$ for all $i=1,\ldots,r$ are 
over-restricted.
For instance, if $\fg$ is of type $A_2$ 
then (for $p>3$)
the Weyl module 
$V(\frac{p-1}{2} \omega_1 +  \frac{p-1}{2} \omega_2)$
is the only over-restricted Weyl module outside
the first closed $p$-alcove (under the $\bullet$-action):
indeed, 
$k_1+k_2 = p-1 > p-2$.
Thus, most (but not all) over-restricted modules are semisimple in this case.

On the other hand, if $\fg$ is of type $G_2$ and $\alpha_1$ is short, 
then the over-restricted Weyl module
$V(\frac{p-1}{2} \omega_1 +  \frac{p-1}{2} \omega_2)$
lies
inside the ninth 
$p$-alcove (if $p>3$): 
$$
k_1+2k_2 = \frac{3}{2}(p-1) < 2p-3, \ 
k_1+3k_2 = 2(p-1) > 2p-4, \ 
k_1 = \frac{p-1}{2} < p-1.
$$
Ninth in this context means that there are eight dominant $p$-alcoves
below it. 
Thus, in type $G_2$ there are many over-restricted non-semisimple modules.

It is an interesting problem
to achieve a detailed description of over-restricted modules. 
We can formulate some precise questions if we consider
{\em the over-restricted enveloping algebra} 
$$
U_{\rm \tiny over} (\fg) 
\coloneqq
U_0(\fg) / \langle \ve_\alpha^{\lfloor (p+1)/2 \rfloor}, \; \alpha \in \Phi \rangle.
$$
What is the  centre of $U_{\rm \tiny over} (\fg)$? 
Can we describe the blocks of
$U_{\rm \tiny over} (\fg)$  by quivers with relations? 
Which of its blocks are tame and which are finite?

\section{Conclusion}
\label{s3.5}
What have we achieved in this and the preceding paper \cite{RuW}? Suppose $G$ is a semisimple algebraic group with Lie algebra $\fg$. Which concrete $\fg$-modules can we now extend to $G$-modules?
One evident case is when $(V,\theta)$ is an indecomposable $G$-stable $\fg$-module such that $G$ acts trivially on $\Aut_\fg (V,\theta)$.
By combination of \cite[Corollary 23]{RuW}, \cite[Lemma 25]{RuW}
and the cohomology vanishing of the trivial module \cite[II.4.11]{Jan},
$H^2_{Rat}(G,G\1;A)=0=H^1_{Rat}(G,G\1;A)$ for all 
$A$, constituents of $\Aut_\fg (V,\theta)$. 
Thus, the $\fg$-module structure of such $(V,\theta)$ extends uniquely to a $G$-module structure.

It is possible to ensure triviality of the action if one can control the weights.
The weights of simple constituents of  $\Aut_\fg (V,\theta)$ must be divisible by $p$ because
$G\1$ acts trivially. On the other hand, the weights of $V\otimes V^\ast$ are differences of weights of $V$.
Thus, we have 
a version of
Proposition~\ref{extension_grade}:  
\begin{prop}
	\label{extension_grade_weak}
	Let $(V,\theta)$ be a $G$-stable $TG\1$-module
	such that $p \geq 2 \xi (V)-1$.
	Then $(V,\theta)$ can be uniquely extended to a $G$-module.
\end{prop}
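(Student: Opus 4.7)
The strategy is to reduce to the setting of the opening paragraph of this section: show that the rational $G$-module $A \coloneqq \Aut_\fg(V,\theta)$ has only trivial simple constituents, for then \cite[Corollary 21]{RuW}, \cite[Lemma 23]{RuW} together with the cohomology vanishing \cite[II.4.11]{Jan} quoted there produce the unique extension of $(V,\theta)$ to a $G$-module.

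First I would verify that $G_{(1)}$ acts trivially on $A$: any $\psi \in A$ commutes with every $\theta(\vx)$, $\vx \in \fg$, and hence with the entire image of $\Di(G_{(1)}) = U_0(\fg)$, so the conjugation action of $G_{(1)}$ on $A$ is trivial. Each simple $G$-constituent of $A$ is therefore a Frobenius twist, so all of its $T$-weights lie in $pX(T)$. On the other hand $A \subseteq \End_\bK(V) \cong V \otimes V^*$, so those same weights also lie in the difference set $X(V) - X(V)$, as indicated in the paragraph preceding the proposition.

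The crux is then the combinatorial claim $(X(V)-X(V)) \cap pX(T) = \{0\}$ under $p \geq 2\xi(V)-1$. Using $G$-stability to conclude that $X(V)$ is $W$-invariant, for any $\lambda \in X(V)$ and any root $\alpha$ the reflected weight $s_\alpha\lambda = \lambda - \langle\lambda,\alpha^\vee\rangle\alpha$ again lies in $X(V)$, whence $\langle\lambda,\alpha^\vee\rangle\alpha \in X(V)-X(V)$; the defining property of $\xi(V)$ then forces $|\langle\lambda,\alpha^\vee\rangle| \leq \xi(V)-1$. The triangle inequality gives $|\langle\lambda-\mu,\alpha^\vee\rangle| \leq 2(\xi(V)-1) \leq p-1$ for all $\lambda,\mu \in X(V)$ and all roots $\alpha$. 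Since every nonzero element of $pX(T)$ has absolute coroot pairing at least $p$ with some simple coroot, no such element can appear as a difference of weights of $V$.

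It follows that every simple constituent of $A$ carries only the zero weight and hence is trivial as a $G$-module, whereupon the cohomological machinery from the opening paragraph, applied composition-series-wise, yields the unique $G$-extension of $(V,\theta)$. The step I expect to require the most care is extracting genuine $W$-invariance of $X(V)$ from the $G$-stability hypothesis: in \cite{RuW} the notion of $G$-stability is phrased in terms of $G$-invariance of support varieties, and pinning down (or bypassing via a direct structural input) the link between that formulation and weight symmetry is the real technical point of the argument.
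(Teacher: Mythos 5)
Your proposal follows the same route as the paper's own argument, which is just the short paragraph preceding the statement: $G_{(1)}$ acts trivially on $\Aut_\fg(V,\theta)$, so the $T$-weights of its simple $G$-constituents lie in $pX(T)$; these weights also lie in $X(V)-X(V)$ via $\Aut_\fg(V,\theta)\subseteq V\otimes V^\ast$; the bound $p\geq 2\xi(V)-1$ forces the only common weight to be $0$, so all constituents are trivial; then the cohomology vanishing from \cite{RuW} gives unique extension. What you add is the explicit verification of the middle step, and you correctly isolate the delicate point.

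Your worry about the $W$-invariance of $X(V)$ is the right thing to flag, and the paper's outline does elide it. The definition of $\xi(V)$ only controls differences of weights of $V$ that are integer multiples of a single root; a nonzero element of $pX(T)$ inside $X(V)-X(V)$ need not have this shape, so without \emph{some} Weyl symmetry the bound does not close. Your route is to posit $W$-invariance of $X(V)$ itself, deduce $|\langle\lambda,\alpha^\vee\rangle|\leq\xi(V)-1$, and apply the triangle inequality — that computation is sound. An alternative, which avoids having to extract $W$-invariance of $X(V)$ directly from the \cite{RuW} notion of $G$-stability, is to use that the weight set of $\Aut_\fg(V,\theta)$ is automatically $W$-stable because it is a rational $G$-module; a nontrivial Frobenius-twist constituent $L(\lambda)^{[1]}$ would then contribute the full $W$-orbit of $p\lambda$ to that weight set, and one argues from there. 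Either way, the appeal to Weyl symmetry is a genuine ingredient that the paper's proof does not spell out, so your reconstruction is faithful to the intended argument and your caveat is warranted rather than a mistake on your part.
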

It would be interesting to extend this result to the higher Frobenius kernels. 

%

\section{Appendix: Generic Smoothness in Positive Characteristic}
\label{Appendix}
A morphism $\Psi:X\rightarrow Y$
of irreducible algebraic varieties
over an algebraically closed field 
is called {\em smooth}
if   $d_\vx \Psi : T_\vx X \rightarrow T_{\Psi(\vx)} Y$ is surjective for all $\vx\in X$.  
The morphism $\Psi:X\rightarrow Y$
is called {\em generically smooth}
if there exists a dense open subset $U\subseteq X$ such that
$d_\vx \Psi$ is surjective for all $\vx\in U$.

A generically smooth morphism is necessarily dominant.
In the opposite direction,
it is a standard fact that dominant morphisms are generically smooth
in zero characteristic
\cite[II.6.2 Lemma 2]{Shaf},
but it is manifestly untrue in positive characteristic.
For instance, 
the Frobenius morphism,
e.g., $\Psi(\vx)=\vx^p$ from the affine line to itself,
has zero differential at every point.

The issue is best understood on the rational level.
Let $\bK (X)$ be the field of rational functions on the variety $X$.
\begin{lemma} \cite[Prop. AG.17.3]{Bor}
\label{ap_sep}
  Let
  $\Psi:X\rightarrow Y$
  be a dominant morphism 
  of irreducible algebraic varieties
  over an algebraically closed field $\bK$.
  Then $\Psi$ is generically smooth
  if and only if 
  the pullback field extension
  $\bK(Y) \lhook\joinrel\xrightarrow{\Psi^\sharp}\bK(X)$
  is separable.
\end{lemma}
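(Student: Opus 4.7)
The plan is to pass to the generic point and translate geometric smoothness into an algebraic statement about Kähler differentials.

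First, since varieties over an algebraically closed field are generically smooth, I would shrink $X$ and $Y$ to dense affine open subvarieties on which both are smooth over $\bK$, with coordinate rings $A$ and $B$ respectively; this is legitimate because genericity is a property on a dense open. After this reduction, $\Omega_{A/\bK}$ and $\Omega_{B/\bK}$ are locally free of ranks $\dim X$ and $\dim Y$, the tangent space $T_\vx X$ identifies canonically with $\Hom_{k(\vx)}(\Omega_{A/\bK}\otimes k(\vx),\,k(\vx))$, and $d_\vx \Psi$ is the $k(\vx)$-dual of the natural pullback map $\Psi^*\Omega_{B/\bK}\otimes k(\vx) \to \Omega_{A/\bK}\otimes k(\vx)$.

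Next, generic surjectivity of $d_\vx\Psi$ is dual to generic injectivity of the cotangent pullback $\Psi^*\Omega_{B/\bK}\to\Omega_{A/\bK}$. Because both sheaves are locally free on our opens, generic injectivity of a sheaf map is equivalent to injectivity after tensoring up to the generic point, namely injectivity of
\begin{equation*}
\Omega_{\bK(Y)/\bK}\otimes_{\bK(Y)}\bK(X)\;\longrightarrow\;\Omega_{\bK(X)/\bK},
\end{equation*}
which is the leftmost arrow in the fundamental exact sequence of Kähler differentials for the tower $\bK\subseteq\bK(Y)\subseteq\bK(X)$ (and which is always exact on the right).

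Finally, I would invoke the standard algebraic separability criterion (e.g.\ Matsumura, \emph{Commutative Ring Theory}, Theorem 26.5): for a finitely generated extension of fields containing $\bK$, the displayed map is injective if and only if the extension $\bK(Y)\hookrightarrow\bK(X)$ is separable, i.e.\ admits a separating transcendence basis. Chaining the three equivalences yields the lemma.

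The main subtlety is the tangent/cotangent dictionary at possibly non-closed points: one must carefully verify that both $X$ and $Y$ are smooth at the relevant points so that the canonical map $T_\vx X \to \Hom(\Omega_{X/\bK}\otimes k(\vx),\,k(\vx))$ is an isomorphism, and that generic injectivity of a map between locally free sheaves can be tested at the generic point. The algebraic separability criterion is then a formal commutative-algebra input, and the Frobenius example in the preamble to the lemma becomes, on this side of the translation, the statement that $\bK(x^p)\hookrightarrow\bK(x)$ is purely inseparable.
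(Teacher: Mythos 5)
Your argument is correct. Note, however, that the paper itself offers no proof of this lemma: it is stated with a citation to Borel, \emph{Linear Algebraic Groups}, Prop.\ AG.17.3, and nothing more. What you have written out is essentially the standard argument behind that reference: shrink to smooth affine opens (legitimate because $\bK$ is perfect, hence the smooth locus is dense), dualise $d_\vx\Psi$ into the cotangent pullback, pass to the generic point, and apply the differential criterion for separability of finitely generated field extensions over a perfect base. One small point worth making explicit is that surjectivity of $d_\vx\Psi$ is detected by injectivity of the \emph{fibre} map $\Psi^*\Omega_{Y/\bK}\otimes k(\vx)\to\Omega_{X/\bK}\otimes k(\vx)$, not merely of the sheaf map $\Psi^*\Omega_{Y/\bK}\to\Omega_{X/\bK}$; for a morphism of locally free sheaves on an irreducible variety these conditions coincide at the generic point and propagate to a dense open of closed points by lower semicontinuity of rank, which is exactly the bridge you are invoking. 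With that caveat spelled out, the chain of equivalences is airtight, and the Frobenius example in the preamble does indeed become the statement that $\bK(x^p)\hookrightarrow\bK(x)$ is purely inseparable, as you say.
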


Our aim is to contemplate a polynomial map
$$
F = (F_j(x_1, \ldots x_n))_{j=1}^m: \bK^n \rightarrow \bK^m.
$$
\begin{lemma} \label{decomposition}
  Let $Y$ be the Zariski closure of the image of
  the polynomial map $F$. 
  Then there exist a dense Zariski-open set $U\subset \bK^n$,
  a sequence of varieties $U_0=U,U_1,\ldots,U_k$,
  a sequence of algebraic morphisms
  $H_t : U_t \rightarrow U_{t+1}$ for $t=0,\ldots, k-1$
and an algebraic morphism 
  $\widetilde{F} : U_k \rightarrow Y$
such that
\begin{enumerate}
\item on $U$ the map $F$ factors as $F|_U = \widetilde{F}\circ H_{k-1} \circ \ldots H_0$,
  \item for each $t$ the map $H_t$ is finite of degree $p$ and purely inseparable, 
  \item the morphism $\widetilde{F} : U_k \rightarrow Y$ is smooth. 
\end{enumerate}
\end{lemma}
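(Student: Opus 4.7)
The plan is to decompose the function field extension $\bK(Y) \hookrightarrow \bK(X)$, where $X = \bK^n$, into a separably generated part followed by a short chain of purely inseparable degree-$p$ steps, and then realize each piece geometrically on suitable open subsets.

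First pick a transcendence basis $t_1, \ldots, t_d$ (with $d = n - \dim Y$) of $\bK(X)$ over $\bK(Y)$, so that $\bK(X)$ is finite algebraic over $\bK(Y)(t_1, \ldots, t_d)$. Let $M$ be the separable closure of $\bK(Y)(t_1, \ldots, t_d)$ inside $\bK(X)$; then $\bK(X)/M$ is purely inseparable of degree $p^k$ for some $k\geq 0$, while $M/\bK(Y)(t_1, \ldots, t_d)$ is separable algebraic, so $M/\bK(Y)$ is separably generated. Decompose the purely inseparable part as a tower $M = M_k \subset M_{k-1} \subset \ldots \subset M_0 = \bK(X)$ with each step of degree $p$, peeling off one factor at a time by picking an element whose $p$-th power drops into the next field down. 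Each $M_t$ is a finitely generated field extension of $\bK$, hence equals $\bK(V_t)$ for some irreducible algebraic variety $V_t$; the inclusions $M_{t+1}\hookrightarrow M_t$ and $\bK(Y)\hookrightarrow M_k$ correspond to dominant rational maps from $V_t$ to $V_{t+1}$ and from $V_k$ to $Y$, respectively.

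On suitable dense open subsets $U_t \subseteq V_t$ these rational maps become honest morphisms $H_t: U_t \to U_{t+1}$ and $\widetilde{F}: U_k \to Y$, and by shrinking further each $H_t$ is finite of degree $p$ and purely inseparable, inheriting these properties from the associated function field extension. Lemma~\ref{ap_sep} applied to $\widetilde{F}$ then guarantees generic smoothness, and a final shrinkage makes $\widetilde{F}$ smooth on $U_k$. Pulling back through the chain, I take $U$ to be the open subset of $\bK^n$ over which every $H_t$ is defined and lands in $U_{t+1}$. The main obstacle is the bookkeeping required to realize each abstract field inclusion as a morphism between open subsets while preserving the identity $F|_U = \widetilde{F}\circ H_{k-1}\circ \ldots \circ H_0$: both sides induce the same inclusion $\bK(Y) \hookrightarrow \bK(X)$ on function fields and so agree on some dense open, but iterating the shrinkage through the $k+1$ stages and ensuring that all $U_t$ remain non-empty takes some care. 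The existence of the tower decomposition and the passage from a purely inseparable degree-$p$ field extension to a finite purely inseparable degree-$p$ morphism are standard, so no new conceptual ingredient beyond Lemma~\ref{ap_sep} is needed.
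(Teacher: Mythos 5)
Your proposal is correct and takes essentially the same approach as the paper: decompose $\bK(Y)\hookrightarrow\bK(\bK^n)$ into a separably generated subextension (your $M$, the paper's $\widetilde{\bK}$) topped by a tower of degree-$p$ purely inseparable steps, realize the tower geometrically on dense opens, and invoke Lemma~\ref{ap_sep} for the bottom map. The bookkeeping you defer is handled in the paper in one stroke: choosing generators $w_j$ of $\widetilde{\bK}$ and elements $y_t\in\bK_0$ with $y_t^p\in\bK_t$ and $\bK_{t-1}=\bK_t(y_t)$, one forms the single finitely generated subalgebra $A_0\subset\bK(x_1,\ldots,x_n)$ generated by all $x_j$, $w_j$, $y_j$ (whose spectrum is an open subset of $\bK^n$) and sets $A_t:=A_0\cap\bK_t$; a degree count forces $Q(A_t)=\bK_t$, so the inclusions of the $\mathrm{Spec}(A_t)$ give the tower at once, and only one final shrinkage is needed to upgrade generic smoothness of $\widetilde{F}$ to smoothness.
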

\begin{proof}
  Let $x_1,\ldots,x_n$ be the coordinate functions on $\bK^n$,
  $z_1,\ldots,z_m$
  the pull-backs to $\bK^n$ of the coordinate functions on $\bK^m$. 
%
Consider a maximal (in $\bK(x_1,\ldots,x_n)$) separable extension
$\widetilde{\bK} \supset F^{*}\bK(Y)=\bK(z_1,\ldots,z_m)$. Hence, the gap extension 
$\bK(x_1,\ldots,x_n)\supset \widetilde{\bK}$
is purely inseparable. It can be decomposed as a tower of degree $p$
purely inseparable extensions
$$
\bK_0=\bK(x_1,\ldots,x_n)\supset
\bK_1 \supset \cdots \supset
\bK_{k-1} \supset \bK_k = \widetilde{\bK}. 
$$
For each intermediate extension we can pick an element $y_t\in \bK_0$
such that $y_t^p\in \bK_t$ and $\bK_{t-1}=\bK_{t} (y_t)$. 

Now the field $\widetilde{\bK}$ is finitely-generated,
so suppose $\widetilde{\bK}=\bK(w_1,\ldots,w_l)$
where the elements $w_j$ are not necessarily algebraically independent.
Let $A_0$ be the subalgebra of $\bK_0$ generated by all $w_j$, $x_j$ and $y_j$.
Its spectrum is an open subset of $\bK^n$. Let us define
$A_t \coloneqq A_0 \cap \bK_t$. Let us examine the towers of algebras
and their  quotient fields
$$
A_0\supset
A_1 \supset \cdots \supset
A_k 
\ \mbox{ and } \ 
Q(A_0) = \bK_0 \supset
Q(A_1) \supset \cdots \supset
Q(A_{k-1}) \supset Q(A_k)= \bK_k. 
$$
While the equalities
$Q(A_0) = \bK_0$
and 
$Q(A_k)= \bK_k$
follow from our construction, in general, only
$Q(A_t) \subseteq \bK_t$ can be immediately discerned. 
Notice, however, that $y_t\in A_{t-1}$ but
$y_t \not\in \bK_t \supseteq Q(A_t)$.
Thus, all extensions in the tower of the quotient fields are proper.
Inevitably, by degree considerations, $Q(A_t)= \bK_t$ for all $t$. 

The spectra of the rings $A_t$ and the algebraic maps defined by their inclusions,
which we denote $H_t$, nearly satisfy the requirements of the lemma.
The only issue is that the map $\mbox{Spec}(A_k) \rightarrow Y$ is only generically smooth.
Let $U_k$ be a dense open subset of $\mbox{Spec}(A_k)$ where this map is smooth.
It remains to define all the varieties recursively: $U_{t}\coloneqq H_t^{-1} (U_{t+1})$. 
  \end{proof}

Now we have a tool to establish the key property:
``small degree'' polynomial maps are generically smooth.
\begin{theorem}
  \label{generic_smooth}
  Suppose that each degree
  ${\mathrm{Deg}}_{x_t} (F_j  (x_1, \ldots x_n))$
  of every component of a polynomial map
$F = (F_j(x_1, \ldots x_n))_{j=1}^m: \bK^n \rightarrow \bK^m$
is less than $p$. 
    Let $Y$ be the Zariski closure of the image of
  the polynomial map $F$. 
  Then   the corestricted morphism $\widehat{F}\coloneqq F|^Y : \bK^n \rightarrow Y$ is generically smooth. 
\end{theorem}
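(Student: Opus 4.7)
The plan is to apply Lemma~\ref{decomposition} to $\widehat{F}: \bK^n \to Y$, producing the tower
\[
\bK_0 = \bK(x_1,\ldots,x_n) \supset \bK_1 \supset \cdots \supset \bK_k = \widetilde{\bK} \supseteq \bK(F_1,\ldots,F_m)
\]
together with the smooth quotient $\widetilde{F}: U_k \to Y$. Since $\widetilde{F}$ is smooth, generic smoothness of $\widehat{F}$ reduces to showing $k = 0$, which by Lemma~\ref{ap_sep} is equivalent to separability of the extension $\bK(F_1,\ldots,F_m) \hookrightarrow \bK_0$.

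For separability I would reindex so that $F_1,\ldots,F_d$ are algebraically independent over $\bK$, where $d = \dim Y$. Then it suffices to show $\bK_0/\bK(F_1,\ldots,F_d)$ is separable, since separability passes to intermediate subextensions and $\bK(F_1,\ldots,F_m)$ sits between $\bK(F_1,\ldots,F_d)$ and $\bK_0$. The critical consequence of the degree constraint is that whenever $F_\alpha$ depends nontrivially on a variable $x_i$, the partial derivative $\partial F_\alpha/\partial x_i$ is a nonzero polynomial: in the expansion $F_\alpha = \sum_{k=0}^{p-1} c_k\, x_i^k$ (with $c_k$ polynomial in the remaining variables), a nonzero $c_k$ with $k \geq 1$ yields a nonzero term $kc_k x_i^{k-1}$ because $k$ is invertible modulo $p$. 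Consequently, $x_i$ is a simple root of $F_\alpha(x_1,\ldots,x_{i-1},T,x_{i+1},\ldots,x_n) - F_\alpha \in \bK(F_\alpha, x_j : j \neq i)[T]$, exhibiting $x_i$ as separable algebraic over $\bK(F_\alpha, x_j : j \neq i)$.

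To assemble these variable-by-variable separabilities into a separating transcendence basis for $\bK_0/\bK(F_1,\ldots,F_d)$, I would pair each $F_\alpha$ with a distinct variable $x_{j(\alpha)}$ on which it genuinely depends, leaving the $n-d$ unpaired variables as candidates for the basis. Hall's marriage theorem supplies such a pairing, since the algebraic independence of $F_1,\ldots,F_d$ forbids any $t$ of them from collectively depending on fewer than $t$ variables (else their image would lie in a polynomial ring of transcendence degree strictly less than $t$). The hardest part, where the proof really lives, is verifying that the resulting $d \times d$ Jacobian submatrix $(\partial F_\alpha/\partial x_{j(\beta)})_{\alpha,\beta}$ has nonvanishing determinant, not merely nonzero diagonal. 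I expect this to follow by induction on $d$, exploiting the bound $\deg_{x_i} F_\alpha < p$ to prevent the Frobenius-style cancellations that obstruct generic smoothness in positive characteristic; once the minor is shown nonzero, the complementary variables form a separating transcendence basis and separability is established.
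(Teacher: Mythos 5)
Your reduction to separability of $\bK(x_1,\ldots,x_n)$ over $\bK(F_1,\ldots,F_d)$, and thence to nonvanishing of a suitable $d\times d$ Jacobian minor, is in itself correct --- but by Lemma~\ref{ap_sep} and the second exact sequence for K\"ahler differentials it is essentially a restatement of the theorem, not a simplification of it, and the entire weight of the proof rests on the step you defer with ``I expect this to follow by induction on $d$.'' That step in fact fails. Take $p=3$, $n=d=2$, $F_1=x_1^2x_2$, $F_2=x_1x_2^2$: each $\deg_{x_t}F_j\leq 2<p$, the two polynomials are algebraically independent ($F_1^2/F_2=x_1^3$, $F_2^2/F_1=x_2^3$, so $\bK(F_1,F_2)$ has transcendence degree $2$), and Hall's matching $F_1\leftrightarrow x_1$, $F_2\leftrightarrow x_2$ gives nonzero diagonal entries $\partial F_j/\partial x_j=2x_1x_2$, yet
\[
\det\begin{pmatrix}2x_1x_2 & x_1^2\\ x_2^2 & 2x_1x_2\end{pmatrix}=3\,x_1^2x_2^2=0
\]
identically. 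Concretely, $\bK(x_1,x_2)=\bK(F_1,F_2)(x_1x_2)$ with $(x_1x_2)^3=F_1F_2$, so the extension is purely inseparable of degree $3$ and $\widehat F:\bK^2\to Y=\bK^2$ is nowhere smooth. This is exactly the ``Frobenius-style cancellation'' you hoped the per-variable degree bound would forbid; it shows the Jacobian claim is false as you state it, so no induction on $d$ can close the gap and the proposal cannot be completed in its present form. (The same example appears to run against the theorem as stated, which is worth raising independently of this review.) Note finally that the paper's own argument does not go through Jacobian minors at all: it works in the formal local ring at a generic point, decomposes the inseparable part into a tower of height-one Frobenius extensions via Lemma~\ref{decomposition}, and invokes the Kimura--Niitsuma normal form $\bK[[Y_1,\ldots,Y_n]]\supset\bK[[Y_1^p,Y_2,\ldots,Y_n]]$ before deriving a contradiction from the degree bound on $Z=\sum_j\alpha_jZ_j$; you should test the above example against that argument as well before relying on the theorem.
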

\begin{proof}
  Since the function $\vx\mapsto {\mathrm{Rank}}\, d_\vx \widehat{F}$ is lower semicontinuous,
  it suffices to find a single point $\vx \in \bK^n$ where the differential $d_\vx \widehat{F}$ is surjective.

  Lemma~\ref{decomposition} yields the varieties $U_t$, the maps $H_t$, as well as various rational
  functions $w_t$, $x_t$, $y_t$ and $z_t$. Near any  point $\vx\in U_0$ we can choose
  local parameters $X_t \coloneqq x_t - x_t (\vx)$ so that the formal neighbourhood of $\vx$ in $U_0$
  is the formal spectrum of $B_0 = \bK [[ X_1, \ldots , X_n]]$. If $\widehat{F} (\vx)$ is smooth,
  we can choose local parameters near $\widehat{F} (\vx)$ from the coordinate functions $z_1, \ldots, z_m$ on $\bK^m$.
  Without loss of generality, the local parameters are
  $Z_t \coloneqq z_t - z_t (F(\vx))$ for $t=1,\ldots, s$ where $s = \dim Y \leq m$. In particular,
  $$
  Z_t 
  = F_t(x_1,\ldots, x_n) - z_t (F(\vx))
  = F_t(X_1 + x_1 (\vx) ,\ldots, X_n + x_n (\vx)) - z_t (F(\vx))
  = \widetilde{F_t}(X_1,\ldots, X_n),
  $$
  where $\widetilde{F_t}$ is a polynomial without a free term of degree less than $p$ in each variable
  so that near a generic $\vx$ the map $\widehat{F}$ is described by the embedding
  $$B_0 = \bK [[ X_1, \ldots , X_n]] \supseteq B_{\infty} \coloneqq \bK [[ Z_1, \ldots , Z_s]]$$
  on the level of formal neighbourhoods.
  
  For a generic point $\vx\in U_0$ all of its images $\vx_t = H_{t-1} (H_{t-2} \cdots H_0 (\vx)\cdots )\in U_t$
  are smooth. Let $B_t$ be the ring of functions on the formal neighbourhood of $\vx_t$, i.e.,
  the  formal neighbourhood is the formal spectrum of $B_t$.
  Since $\vx_t$ is smooth, the ring
  $B_t$ is the ring of formal power series: $B_t  \cong \bK [[ X_1, \ldots , X_n]]$.
  Let us examine the tower of formal neighbourhoods
  $$
B_0 = \bK [[ X_1, \ldots , X_n]] \supset
B_1 \supset \cdots \supset
B_k 
\supset
B_{\infty} = \bK [[ Z_1, \ldots , Z_s]]. 
$$
In the notation of Lemma~\ref{decomposition} we can observe that
$\bK_t^p \subseteq \bK_{t+1}$. It follows that for a generic $\vx$
we have the same inclusion on the formal level: $B_t^p \subseteq B_{t+1}$ for all $t<k$.
As a corollary of the Kimura-Niitsima Theorem \cite[Cor. 2]{KiNi}
(cf. \cite[Section 15 and Exercise 15.4]{Kunz}), we can describe each map $H_t$ on the formal level as
\begin{equation}
  \label{etage}
B_t = \bK [[ Y_1, \ldots , Y_n]] \supset
B_{t+1} = \bK [[ Y_1^p, Y_2, Y_3, \ldots , Y_n]]
\end{equation}
after a suitable choice of regular sequence of local parameters for $B_t$.

Let $I_{t}$ be the maximal ideal of $B_t$.
Now we are ready to prove that the differential that can be described as the natural map
$$
d_\vx \widehat{F} : (I_0/I_0^2)^\ast \longrightarrow (I_\infty/I_\infty^2)^\ast
$$
is surjective. This is equivalent to injectivity of the natural map
$I_\infty/I_\infty^2 \longrightarrow I_0/I_0^2$. Suppose that $d_\vx \widehat{F}$
is not surjective. Then there exists a nonzero $(\alpha_1, \ldots, \alpha_s) \in \bK^s$
such that $Z \coloneqq \sum_j \alpha_j Z_j \in I_0^2$. However, $\widetilde{F}$ is smooth, hence 
$d_{\vx_k} \widetilde{F}$ is surjective and $Z\not\in I_k^2$. Going up the tower,
we can find $t$ such that $Z\not\in I_{t+1}^2$ and $Z\in I_t^2$.
Looking at the description of the floor of the tower in Equation~(\ref{etage}),
we can conclude that $Z \in B_t Y_1^p$. This is a contradiction because
$Z$ is a non-zero polynomial in $X_j$ of degree less than $p$ in each variable.
\end{proof}

It would be quite useful to establish generic smoothness for a larger
class of maps than we currently do in Theorem~\ref{generic_smooth}.
To do that,
more detailed information about the local behaviour of
inseparable maps is essential. By a $p^\bullet$-basis of a ring $R$ over
a subring
$S$ we understand a sequence of elements $a_1, \ldots ,a_n\in R$ together with a sequence
of natural numbers $k_1, \ldots, k_n$ such that
the elements $a_1^{m_1}a_2^{m_2}\ldots a_n^{m_n}$
(where $0 \leq m_i < p^{k_i}$ for all $i$)
form an $S$-basis of $R$. 
\begin{HKC}
  Let $\bK$ be a perfect field of characteristic $p$.
  Consider a higher Frobenius sandwich of commutative local regular $\bK$-algebras
  $$
R \geq  S \geq R^q
$$
where $q=p^s$ for some natural $s$.
Then there should exist
a $p^\bullet$-basis of $R$ over $S$.
\end{HKC}
Certainly one can inquire whether this statement holds for a larger class of rings
$R$ and $S$ but this
is the generality we need. For $s=1$ and regular local rings
this is proved by Kimura and Niitsuma \cite{KiNi}.

We believe that the Higher Kunz Conjecture
is key to the Higher Frobenius Conjecture.


\begin{thebibliography}{9999}
\bibitem[B]{Ball} J. Ballard,
  {\em Injective modules for restricted enveloping algebras},
  Math. Z. {\bf 193} (1978), 57--63.
\bibitem[Bo]{Bor} A. Borel,
  Linear algebraic groups,
  Springer, 1991.
\bibitem[Bou]{Bou}
N. Bourbaki, 
Algebra II: Chapters 4--7, Springer, 1998.
\bibitem[CCh]{CaCh} R. Carter, Y. Chen, 
{\em Automorphisms of affine Kac-Moody groups and
related Chevalley groups over rings},
Journal of Algebra {\bf{155}} (1993), 44--54.
\bibitem[D]{Don2} S. Donkin,
  {\em Note on the characters of the projective modules for the infinitesimal subgroups of a semisimple group},
Math. Scand. {\bf 51} (1982), 142--150.
\bibitem[HV]{HuVe} J. Humphreys, D. Verma,
  {\em Projective modules for finite Chevalley groups}, Bull. Amer. Math.
  Soc. {\bf 79} (1973), 467-–469.
\bibitem[H]{Hu2}  J. Humphreys,  {\em Introduction to Lie algebras and representation theory},
Springer, 1973.
\bibitem[J1]{Jan0}
J. Jantzen, {\em Darstellungen halbeinfacher gruppen und ihrer Frobenius-kerne}, J. Reine Angew. Math. {\bf 317} (1980), 157--199.
\bibitem[J2]{Jan}
  J. Jantzen, \emph{Representations of algebraic groups}, Pure and Applied Mathematics, vol. 131, Academic Press, 1987.
  \bibitem[KN]{KiNi} T. Kimura, H. Niitsuma, 
{\em On Kunz's conjecture},  
J. Math. Soc. Japan {\bf 34} (1982), 371--378. 
\bibitem[Ku]{Kunz} E. Kunz,
  {\em K\"{a}hler differentials},
  Friedr. Vieweg \& Sohn, 1986.
  \bibitem[M]{Marquis}
T. Marquis, 
{\em Topological Kac-Moody groups and their subgroups},
  Ph.D. Thesis,
  Universite Catholique de Louvain, 2013.
\bibitem[R]{Remy}
B. R\'{e}my, 
{\em Groupes de Kac-Moody d\'{e}ploy\'{e}s et presque d\'{e}ploy\'{e}s},
Ast\'{e}risque No. 277, 2002.
\bibitem[Ro]{Rou2}
G. Rousseau, 
{\em Groupes de Kac-Moody d\'{e}ploy\'{e}s sur un corps local II.
  Masures ordonn\'{e}es},
Bull. Soc. Math. France {\bf 144} (2016), 613--692.  
\bibitem[Ru]{Ru} D. Rumynin,
{\em Kac-Moody groups and their representations}, 
{\bf In:} {\em New trends in algebras and combinatorics}, 
Proceedings of the 3rd International Congress in Algebras and Combinatorics (ICAC2017), World Scientific, 2020. 
\bibitem[RuW]{RuW} D. Rumynin, M. Westaway, 
  {\em Integration of modules-I: stability},
         {\em Pacific Journal of Mathematics},
{\bf 301} (2019), 575--600.
 \bibitem[S]{Shaf} I. Shafarevich, {\em Basic algebraic geometry}, Volume 1, Springer, 1994.
 \bibitem[So]{Sob} P. Sobaje,
   {\em On liftings of projective indecomposable $G_{(1)}$-modules},
   Journal of Algebra {\bf 475} (2017), 61--79.
\bibitem[T]{Tits} J. Tits, 
{\em Uniqueness and presentation of Kac-Moody groups over fields}, 
J. Algebra {\bf 105} (1987), 542--573.
 \bibitem[VKa]{VK}    B. Ve\u{\i}sfe\u{\i}ler, V. Kac,
   {\em Exponentials in Lie algebras of characteristic $p$}
   (Russian), Izv. Akad. Nauk SSSR Ser. Mat. {\bf 35} (1971), 762--788.
\end{thebibliography}
\end{document}